\documentclass[11pt, reqno]{amsart}
\usepackage{amscd,amsfonts,amsmath,amssymb,amstext,amsthm}
\usepackage[all]{xy}
\usepackage{tikz-cd}
\usepackage{xcolor}
\usepackage{enumerate}
\usepackage{verbatim}
\usepackage{hyperref}
\usepackage{enumitem}
\usepackage{booktabs}
\usepackage{kotex}

\makeatletter

\theoremstyle{plain}
\newtheorem{theorem} {Theorem} [section]
\newtheorem{lemma} [theorem]{Lemma}
\newtheorem{proposition}[theorem]{Proposition}

\newtheorem{conjecture}[theorem]{Conjecture}
\theoremstyle{definition}

\newtheorem{example}[theorem]{Example}

\newtheorem{remark}[theorem]{Remark}
\numberwithin{equation}{section}

\title[Intermediate pseudoconvexity of fiber bundles]{Intermediate pseudoconvexity of fiber bundles}
\date{June 14, 2026}
\author{Masanori Adachi}
\address{Department of Mathematics, Faculty of Science, Shizuoka University, 836 Ohya, Suruga-ku, Shizuoka, 422-8529, Japan}
\email{adachi.masanori@shizuoka.ac.jp}
\author{Seungjae Lee}
\address{Kyungpook National University
80, Daehak-ro, Buk-gu, Daegu, 41566, Republic of Korea}
\email{seungjae@knu.ac.kr}
\author{Aeryeong Seo}
\address{Kyungpook National University
80, Daehak-ro, Buk-gu, Daegu, 41566, Republic of Korea}
\email{aeryeong.seo@knu.ac.kr}

\subjclass[2020]{Primary 32L05,
Secondary 32E10, 32F10, 32V15.}
\keywords{Holomorphic ball bundles,  
$q$-convexity, harmonic section, foliation}

\thanks{Authors equally contributed to this work. }

\begin{document}

\begin{abstract}
In this paper, we investigate the pseudoconvexity of locally trivial holomorphic ball bundles over compact Riemann surfaces of genus $\geq 2$, as well as the intermediate pseudoconvexity of their complements in the associated projective space bundles. 
Inspired by Brunella's work, we prove that any such ball bundle is $1$-convex, while its complement is $n$-convex, where $n$ denotes the dimension of the ball fiber, provided that the bundle admits a harmonic section with a point of maximal rank. 
\end{abstract}
\maketitle

\section{Introduction}

In the theory of several complex variables, fiber bundles often serve as important sources of examples and counterexamples. 
During the 1980s, locally trivial holomorphic disk bundles over compact complex manifolds were extensively studied.
Although such disk bundles are locally pseudoconvex, their global pseudoconvexity exhibits a variety of different phenomena.
Ohsawa \cite{Ohsawa0} constructed a Stein disk bundle over an elliptic curve, while Diederich and Forn{\ae}ss \cite{Diederich-Fornaess} exhibited a disk bundle over a Hopf surface that is not exhausted by pseudoconvex domains. 
In contrast, Diederich and Ohsawa \cite{Diederich-Ohsawa1} established that every disk bundle over a compact K\"ahler manifold is weakly 1-complete, i.e., it admits a smooth plurisubharmonic exhaustion function.
Subsequent work of Barrett \cite{Barrett} and of Diederich--Ohsawa \cite{Diederich-Ohsawa2} further investigated criteria for disk bundles over compact Riemann surfaces to be Stein or 1-convex.

More recently, the pseudoconvexity of other types of fiber bundles has also been investigated. 
In addition to carrying out a more detailed study of disk bundles, Deng and Forn{\ae}ss \cite{Deng-Fornaess} constructed locally trivial holomorphic ball bundles without nonconstant holomorphic functions over any given compact complex manifold with positive first Betti number. 
Seo \cite{Seo} established the weakly 1-completeness of locally trivial holomorphic fiber bundles over compact K\"ahler manifolds whose fibers  are bounded symmetric domains, under certain conditions on the holonomy representation.

The aim of this paper is to further clarify the global pseudoconvexity of holomorphic $\mathbb{B}^n$-bundles, where $\mathbb{B}^n $ denotes the unit ball in $\mathbb{C}^n$, $n \geq 1$. More precisely, we study conditions under which $\mathbb{B}^n$-bundles are 1-convex, a condition stronger than weakly 1-completeness, and investigate intermediate pseudoconvexity of their complements in the associated holomorphic $\mathbb{CP}^n$-bundles, where $\mathbb{CP}^n $ denotes the $n$-dimensional complex projective space.
While the complement of a disk bundle is again a disk bundle in the case $n=1$, the complement of a $\mathbb{B}^n$-bundle has pseudoconcave boundary for $n>1$. 
It is therefore natural to ask whether they still satisfy intermediate pseudoconvexity.
To investigate not only the 1-convexity of ball bundles but also the intermediate pseudoconvexity of their complements, we adopt an approach inspired by Brunella \cite{Brunella}. The key idea is to study the leafwise positivity of the normal bundle of the foliation naturally induced on the boundary of the $\mathbb{B}^n$-bundle.

To state our results, we now describe the setting of this paper briefly (see Section \ref{sect:positivity} for more details).
Let $\Sigma$ be a compact Riemann surface of genus $\geq 2$ and $E$ a locally trivial holomorphic $\mathbb{B}^n$-bundle over $\Sigma$.
Since $E$ is a flat bundle, $E$ is naturally embedded in the associated $\mathbb{CP}^n$-bundle $\widehat{E}$. 
We write the complement of the closure of $E$ in $\widehat{E}$ by $E' := \widehat{E} \setminus \overline{E}$, and the common boundary of $E$ and $E'$ by $\partial E$, which is oriented as the boundary of $E$.
The flat structure induces a horizontal foliation $\mathcal{F}$ on $\partial E$ by Riemann surfaces.
Write $T^{1,0}_{\partial E}$ for the CR holomorphic tangent bundle of $\partial E \subset \widehat{E}$. 
The CR normal line bundle $\mathcal N :=  (T^{1,0}_{\widehat{E}}|_{\partial E}) / T^{1,0}_{\partial E}$ can be seen as a leafwise holomorphic line bundle over $\partial E$. 

Our main result asserts the leafwise positivity of the CR normal line bundle $\mathcal N$ provided that the $\mathbb{B}^n$-bundle admits a harmonic section with a point of maximal rank.
\begin{theorem}\label{thm:main}
Let $\Sigma$ be a compact Riemann surface of genus $\geq 2$ and $E$ a holomorphic fiber bundle with fiber $\mathbb{B}^n$ embedded in the associated $\mathbb{CP}^n$-bundle $\widehat E$. Suppose that the $\mathbb{B}^n$-bundle $E$ over $\Sigma$ admits a harmonic section with a point of maximal rank.
Then the CR normal line bundle $\mathcal N$ over $\partial E$ admits a smooth hermitian metric of positive curvature along the horizontal foliation $\mathcal{F}$.
\end{theorem}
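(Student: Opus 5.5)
The plan is to construct the metric on $\mathcal N$ from the harmonic section, in the spirit of Brunella's argument for disk bundles. Write $\Sigma = \Delta/\Gamma$ with $\Gamma \subset \pi_1(\Sigma)$ acting on $\mathbb{B}^n$ through the holonomy $\rho:\pi_1(\Sigma)\to PU(n,1)$. Then
\[
E=(\Delta\times\mathbb{B}^n)/\pi_1(\Sigma), \qquad \partial E=(\Delta\times\partial\mathbb{B}^n)/\pi_1(\Sigma).
\]
The leaves of $\mathcal F$ are the images of the slices $\Delta\times\{\xi\}$, $\xi\in\partial\mathbb{B}^n$. On each such slice, $\mathcal N$ is the pullback of the normal line of $\partial\mathbb{B}^n\subset\mathbb{CP}^n$ at $\xi$; this is a trivial line along the leaf, with the leafwise holomorphic structure induced by the flat structure. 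A hermitian metric on $\mathcal N$ therefore amounts to a positive function $h(z,\xi)$ on $\Delta\times\partial\mathbb{B}^n$ satisfying the transformation rule dictated by how $\rho(\gamma)$ acts on normal vectors at boundary points. The leafwise curvature is $-i\partial_z\bar\partial_z\log h(\cdot,\xi)$, so the goal is a $\rho$-equivariant $h$ with $\log h(\cdot,\xi)$ strictly superharmonic in $z$ for each $\xi$.

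The natural candidate comes from the Poisson kernel, i.e.\ Busemann functions. For $w\in\mathbb{B}^n$ and $\xi\in\partial\mathbb{B}^n$, set
\[
P(w,\xi)=\frac{1-|w|^2}{|1-\langle w,\xi\rangle|^2},
\]
which transforms under $\mathrm{Aut}(\mathbb{B}^n)$ exactly as the normal vectors at $\xi$ do, up to the factor $|\text{Jacobian}|$ in the normal direction. Given the equivariant harmonic map $f:\Delta\to\mathbb{B}^n$ (the lift of the harmonic section), define $h(z,\xi)=P(f(z),\xi)^{\pm1}$, with the sign fixed by orientation conventions. The first step is to verify that this choice is invariant, i.e.\ defines a metric on $\mathcal N$ over $\partial E$; this follows from the cocycle identity for $P$ under automorphisms. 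It is smooth because $f$ is smooth and $P$ is smooth on $\mathbb{B}^n\times\partial\mathbb{B}^n$.

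The second step is the curvature computation. Since $\log P(\cdot,\xi)$ is, up to a constant, the Busemann function $b_\xi$ for the complex hyperbolic metric, the computation reduces to
\[
\Delta_z\, (b_\xi\circ f)=\operatorname{Hess} b_\xi(df,df)+db_\xi(\tau(f)),
\]
and the tension term $\tau(f)$ vanishes by harmonicity. The Hessian of a Busemann function in complex hyperbolic space is positive semidefinite. It degenerates only in the direction $\nabla b_\xi$, and is weaker but positive on the complex line spanned by $\nabla b_\xi$ and $J\nabla b_\xi$. Hence $\Delta(b_\xi\circ f)\ge 0$, giving semipositivity everywhere. Strict positivity holds at $(z,\xi)$ unless $df_z(\partial_x)$ and $df_z(\partial_y)$ both lie in $\mathbb{R}\nabla b_\xi$, which happens in particular wherever $\operatorname{rank} df_z\le 1$.

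The third step, which I expect to be the main obstacle, is upgrading this to strict positivity everywhere, since $f$ only has rank $2$ at some point. Strictness fails on the set where $\operatorname{rank} df\le1$ (and, at points of rank $2$, only for special $\xi$). Here I would use the freedom to change the metric by a leafwise correction, $h\mapsto h\,e^{-\varepsilon\psi}$, in Brunella's manner. Alternatively, I would average over the leaf: each leaf is dense, or at least accumulates on the region where $f$ has maximal rank. To pass from leafwise semipositivity with strict positivity on an open set met by every leaf to a genuinely positive metric, I would solve a leafwise $\partial\bar\partial$-type equation. Concretely, I would seek $\psi$ on $\partial E$ with $i\partial\bar\partial_{\mathcal F}\psi>-\Theta$ off the strict set, using the fact that the holonomy $\rho$ is nonelementary: a harmonic section with a point of maximal rank forces Zariski density, or at least no fixed boundary point. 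This nonelementarity lets me argue that no leaf stays entirely in the degenerate region, so positivity propagates through a compactness argument on $\partial E$.

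If that fails, the fallback is to use the real-analyticity of $f$. The degenerate set is then a proper analytic subset, and one can convolve $\log h$ along the fibers $\partial\mathbb{B}^n$ with a $PU(n,1)$-compatible kernel, mixing in nearby $\xi$ where positivity is strict.
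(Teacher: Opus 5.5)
\textbf{Steps 1 and 2.} These are correct and coincide with the paper's Proposition \ref{prop:semi-positive}. The metric induced from $T_{s(\zeta)}^*g_{\mathbb{C}^n}$ is exactly your Poisson-kernel metric, $|[\tau]|_{g_{\mathcal N}} = (1-|s|^2)/|1-s\cdot\overline{w}|^2$. Your Busemann-Hessian argument is a coordinate-free version of the paper's Cauchy--Schwarz computation. One correction: the exponent $\pm1$ is not a convention. The transformation law of $\mathcal N$ forces the choice for which the curvature is a positive multiple of $\sqrt{-1}\partial\overline{\partial}(b_\xi\circ f)$; the other choice is a metric on $\mathcal N^*$.

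\textbf{The gap is Step 3,} which is the actual content of the theorem beyond semi-positivity. There you list strategies --- leaf density, a leafwise $\partial\overline{\partial}$-inequality closed by an unspecified ``compactness argument'', nonelementarity or Zariski density of $\rho$, fiberwise convolution --- but you carry none of them out. Several are false in general for $n\ge 2$:
\begin{itemize}
\item Leaves need not be dense. In Example \ref{ex:complex} the circle $\partial\mathbb{D}\times\{0\}\subset\partial\mathbb{B}^n$ is $\rho(\pi_1(\Sigma))$-invariant.
\item Zariski density fails in both Examples \ref{ex:complex} and \ref{ex:totallyreal}.
\item Solving a leafwise Laplacian inequality on a foliated manifold is a genuinely hard problem, and you give no tool for it.
\end{itemize}

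\textbf{The missing observation is that no leaf dynamics is needed.} Your parenthetical says strictness fails ``at points of rank $2$, only for special $\xi$''. As written, this contradicts your own Step 2 criterion. If $\operatorname{rank} df_z=2$, the image of $df_z$ cannot lie in the line $\mathbb{R}\nabla b_\xi$, so the curvature is strictly positive at $(z,\xi)$ for \emph{every} $\xi$. Let $C$ be the set where $ds$ is not injective. Then positivity holds on the whole preimage $\pi_{\partial E}^{-1}(W)$ of a neighborhood $W\Subset\Sigma\setminus C$ of the maximal-rank point, uniformly on compact subsets.

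The correction $h\mapsto e^{-\varepsilon\widehat\psi\circ\pi_{\partial E}}h$ should then use a function pulled back from the base. Each leaf is locally a section of $\pi_{\partial E}$ with leafwise coordinate $\zeta$, so the leafwise Levi form of $\widehat\psi\circ\pi_{\partial E}$ is just $\pi_{\partial E}^*(\sqrt{-1}\partial\overline{\partial}\widehat\psi)$. The construction goes as follows:
\begin{itemize}
\item Choose $U\Subset V\Subset W$ around the point.
\item $\Sigma\setminus\overline U$ is an open Riemann surface, so by Behnke--Stein it carries a smooth strictly subharmonic function $\psi$.
\item Cutting off gives $\widehat\psi$ on $\Sigma$ with $\widehat\psi=\psi$ off $V$.
\end{itemize}
The new curvature $\Theta_h+\varepsilon\pi_{\partial E}^*\sqrt{-1}\partial\overline{\partial}\widehat\psi$ is positive over $\Sigma\setminus V$, because $\Theta_h\ge 0$ there and $\psi$ is strictly subharmonic. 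It is positive over $W$ for $\varepsilon$ small, because $\Theta_h$ is bounded below on $\pi_{\partial E}^{-1}(\overline V)$. This is the paper's argument. Without it, or some other concrete construction, Step 3 does not prove the theorem.
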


Note that when the genus of $\Sigma$ is zero or one, the assumption on the harmonic section in Theorem \ref{thm:main} is never fulfilled. Indeed, since the fundamental group $\pi_1(\Sigma)$ is trivial or abelian, the holonomy representation $\rho \colon \pi_1(\Sigma) \to \operatorname{Aut}(\mathbb{B}^n)$ has a fixed point in $\mathbb{B}^n$ or on the boundary $\partial\mathbb{B}^n$.
This gives rise to a locally constant section of the closure of the holomorphic $\mathbb{B}^n$-bundle, which contradicts the existence and uniqueness of the harmonic section with positive rank in the assumption.  
On the other hand, when the genus of $\Sigma$ is at least two, many such examples exist; see Examples \ref{ex:complex} and \ref{ex:totallyreal}. 

The following pseudoconvexity of $E$ and the intermediate pseudoconvexity of $E'$ are consequences of our main result through a Brunella-type construction.

\begin{theorem} \label{cor:1-n-covex}
Under the assumption of Theorem \ref{thm:main}, the $\mathbb{B}^n$-bundle $E$ is 1-convex and the $(\mathbb{CP}^n \setminus \overline{\mathbb{B}^n})$-bundle $E'$ is $n$-convex, but if $n \geq2$, then $E'$ is not $(n-1)$-convex. 
\end{theorem}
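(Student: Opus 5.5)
We follow Brunella's construction and use Theorem \ref{thm:main} to build defining functions for $\partial E$ with controlled Levi forms on both sides. Let $h$ be the smooth hermitian metric on $\mathcal N$ with positive curvature along $\mathcal F$ given by Theorem \ref{thm:main}.

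\textbf{Step 1: a defining function.} Using $h$, we produce a smooth function $\psi$ on a neighborhood $U$ of $\partial E$ in $\widehat E$ with $\partial E=\{\psi=0\}$ and $E\cap U=\{\psi<0\}$. We choose it so that, at points of $\partial E$, the Levi form $i\partial\bar\partial\psi$ restricted to the leaf direction $T\mathcal F$ is positive. Concretely, we view $\psi$ locally as $-\log$ of the $h$-norm of a local section of $\mathcal N^{-1}$, or as a distance-type function normalized by $h$. Here we use that the leaves are holomorphic curves in $\widehat E$ along which $\mathcal N$ restricts holomorphically, so $i\partial\bar\partial\psi|_{T\mathcal F}$ equals (up to sign convention) the leafwise curvature of $h$, which is positive.

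\textbf{Step 2: the full Levi form.} On $T^{1,0}_{\partial E}$, which has rank $n$, we have the splitting $T\mathcal F\oplus V$, where $V=T^{1,0}_{\partial E}\cap T^{1,0}(\text{fiber})$ is the CR tangent space of $\partial\mathbb B^n$ in the fiber. Along $V$ the fiberwise Levi form of the sphere is strictly positive from the $E$ side. The mixed terms are handled by replacing $\psi$ with $\psi e^{A\phi}$ or by adding $A\psi^2$, and by using the freedom to rescale by fiberwise-constant factors. This makes the Levi form of $\psi$ positive definite on all of $T^{1,0}_{\partial E}$. Hence $\partial E$ is strictly pseudoconvex from the side of $E$.

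\textbf{Step 3: convexity conclusions.} Since $\Sigma$ is compact, $\overline E$ is compact. The function $\varphi=-\log(-\psi)$, glued with a bounded function in the interior of $E$, is a smooth exhaustion of $E$ that is strictly plurisubharmonic outside a compact set. Strictly, we take $\varphi=\chi(-\log(-\psi))$ near the boundary with $\chi$ convex increasing, and use the standard fact that the complex Hessian of $-\log(-\psi)$ is positive once $\psi$ is strictly psh in tangential directions and the normal term $|\partial\psi|^2/\psi^2$ dominates. This proves $E$ is $1$-convex.

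For $E'$ we use the exhaustion $-\log\psi$ near $\partial E$, which is compact on the closed $\mathbb{CP}^n$-side complement minus the boundary. From the $E'$ side, the Levi form of $\partial E$ has one positive eigenvalue (the leaf direction, by Step 1 and the sign conventions) and $n-1$ negative ones (from the sphere directions in $V$). So $-\log\psi$, after Brunella-type modification, has complex Hessian with at least two positive eigenvalues: one from the leaf direction and one from the normal direction. In the paper's convention, $q$-convexity means at most $q-1$ non-positive eigenvalues among $n+1$. This gives $n$-convexity of $E'$ outside a compact set.

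\textbf{Step 4: the negative statement.} Suppose $n\ge2$ and $E'$ were $(n-1)$-convex. Take a fiber $\mathbb{CP}^n\setminus\overline{\mathbb B^n}$. It contains compact projective lines near infinity lying entirely in $E'$, which can be pushed toward $\partial E$. Alternatively, use a maximum principle: an $(n-1)$-convex exhaustion restricted to the $n$-dimensional complex submanifolds $\mathbb{CP}^{n-1}\subset$ fiber$\setminus\overline{\mathbb B^n}$ gives a contradiction. The exhaustion is $q$-subharmonic on complex submanifolds of dimension $\ge q$, so its restriction to a compact subvariety of dimension $n-1$ attains an interior maximum. These hyperplanes foliate and escape to the boundary, so the exhaustion would be bounded on noncompact regions, a contradiction.

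\textbf{Main obstacle.} The main obstacle is Step 2, controlling the mixed leaf/fiber terms of the Levi form uniformly on the compact $\partial E$ so that exactly one sign pattern survives on each side.
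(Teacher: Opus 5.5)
\textbf{Gap in Steps 1--3.} The core claim of these steps is false. The leaves of $\mathcal F$ are complex curves lying in $\partial E$. So any defining function $\psi$ vanishes identically on each leaf $L$, and
\[
\sqrt{-1}\partial\overline\partial\psi(X,\overline X)=\sqrt{-1}\partial\overline\partial(\psi|_L)(X,\overline X)=0 \qquad \text{for all } X\in T^{1,0}_{\mathcal F} \text{ on } \partial E .
\]
This is exactly why $T^{1,0}_{\mathcal F}$ is the Levi kernel. Replacing $\psi$ by $e^{A\phi}\psi$, $\psi+A\psi^2$, or a rescaling only multiplies the Levi form on $T^{1,0}_{\partial E}$ by a positive function. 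Hence $\partial E$ has Levi rank exactly $n-1$ for every defining function and is never strictly pseudoconvex.

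Two consequences follow. The ``standard fact'' you use for $E$ in Step 3 does not apply. For $E'$, the leaf direction contributes a zero boundary Levi eigenvalue, not a positive one, so your count of two positive eigenvalues is unsupported. Also, a function built from $h$-norms on $\partial E$ is not a defining function; the link between $h$ and a defining function is precisely what must be constructed.

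The leafwise positivity of $\mathcal N$ is invisible in the boundary Levi form. It enters only through the first-order normal variation of $\partial\overline\partial r$ on the Levi kernel. Consider the block of $\sqrt{-1}\partial\overline\partial(-\log(-r))$ spanned by a leaf direction $\xi$ (with $dr(\xi)=0$) and a transversal $\tau$ (with $dr(\tau)=1$). Its leaf, normal and mixed entries have orders $1$, $r^{-2}$ and $|r|^{-1}$. So the sign of the determinant is a competition at order $r^{-2}$ between the normal derivative of $\partial\overline\partial r(\xi,\overline\xi)$ and $|\partial\overline\partial r(\xi,\overline\tau)|^2$. This leaf/normal coupling, not the leaf/fiber one you flag, is the real obstacle.

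The missing idea is the paper's D'Angelo-form computation:
\begin{enumerate}
\item Convert $\widehat g_{\mathcal N}$ into a pseudo-hermitian structure $\theta$ by normalizing the global transversal field $\operatorname{Im}\nu$ by $|\nu|_{\widehat g_{\mathcal N}}$, and take $r$ with $\theta_r=\theta$.
\item With $T=\operatorname{Im}\tau/|\tau|_{\widehat g_{\mathcal N}}$, one gets $\alpha_{1,0}=\partial\log|\tau|_{\widehat g_{\mathcal N}}$ along leaves. Hence $\overline\partial\alpha_{1,0}=\tfrac12\Theta_{\widehat g_{\mathcal N}}>0$ on $\operatorname{Ker}\lambda=T^{1,0}_{\mathcal F}$.
\item The normal derivative at $\partial E$ of the (suitably normalized) determinant above is $-\overline\partial\alpha_{1,0}(\xi,\overline\xi)$. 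This resolves the competition.
\end{enumerate}
It yields 1-convexity of $E$ via Proposition~\ref{prop:adachi-yum}. With $\mathcal K=T^{1,0}_{\mathcal F}$ and $\kappa=1$, Proposition~\ref{prop:q-convexity} gives two positive eigenvalues of $\sqrt{-1}\partial\overline\partial(-\log(-r))$ on a collar in $E'$, i.e.\ $n$-convexity.

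\textbf{Step 4.} This step can be made into a correct and more elementary substitute for the paper's argument. The paper restricts to a fiber, uses Andreotti--Grauert finiteness of $H^{n,n-1}$ and Serre duality for $H^{0,1}_c$, and extends $z_1^j$ from a neighborhood of $\overline{\mathbb B^n}$ to $\mathbb{CP}^n$. However, your stated reason is wrong. The hyperplanes $H_c=\{z_1=cz_0\}$, $|c|>1$, neither foliate nor escape: they all contain the $(n-2)$-plane $\{z_0=z_1=0\}$ at infinity, since any two hypersurfaces of $\mathbb{CP}^n$ meet for $n\ge2$. Projective lines only suffice when $n=2$.

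The correct argument is as follows. Suppose an exhaustion $\varphi$ had at most $n-2$ non-positive Levi eigenvalues on $E'\setminus K$. Then every $(n-1)$-dimensional subspace of $T^{1,0}E'$ there contains a positive direction, so $\varphi|_{H_c}$ has no local maximum on $H_c\setminus K$. Hence $\max_{H_c}\varphi\le\max_K\varphi$ for all $c$. But $(c,0,\dots,0)\in H_c$ tends to $\partial\mathbb B^n$ as $|c|\downarrow 1$, contradicting that $\varphi$ is an exhaustion.
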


The disk bundle case $n = 1$ was established by Diederich and Ohsawa \cite{Diederich-Ohsawa1, Diederich-Ohsawa2} and Barrett \cite{Barrett} under a weaker assumption on the harmonic section. The main theorem in \cite{Seo} yields the weakly 1-completeness of $E$. Thus, our theorem on the $\mathbb{B}^n$-bundle $E$ may be regarded as a refinement of the result in \cite{Seo} in the case where the base manifold is one-dimensional.  

To explain our result on the  $(\mathbb{CP}^n \setminus \overline{\mathbb{B}^n})$-bundle $E'$, let us recall $q$-convexity and $q$-completeness, notions for intermediate pseudoconvexity that we shall use. 
{A complex manifold $\Omega$ is said to be \emph{$q$-convex} (resp., \emph{$q$-complete}) if $\Omega$ admits a smooth exhaustion function $\varphi$
such that $\sqrt{-1} \partial \overline \partial \varphi$ has at most $(q-1)$ non-positive eigenvalues outside a compact subset in $\Omega$ (resp., on all of $\Omega$).}
By a theorem of Greene and Wu \cite{Green-Wu}, every non-compact connected complex manifold $\Omega$ of dimension $N$ is $N$-complete (see also Ohsawa \cite{Ohsawa1}). Since $\dim_{\mathbb C} E' = n+1$, it follows that $E'$ is automatically $(n+1)$-complete. 
On the other hand, $E' \subset \widehat{E}$ is locally $n$-complete because $\partial E$ is foliated by Riemann surfaces (see, e.g., \cite[Proposition 3.5]{Ohsawa-Pawlaschyk}, where $n$-completeness corresponds to $(n-1)$-pseudoconvexity).
This leads naturally to the question of whether $E'$ is globally $n$-convex. Our theorem shows that this is indeed the case.

Our argument gives only partial result (Proposition \ref{prop:semi-positive}) for holomorphic ball bundles over compact K\"ahler manifolds of dimension $> 1$ since the hermitian metric of $\mathcal{N}$ that we construct is only leafwise semi-positive in general (Remark \ref{rem:non-positivity}).
In \cite[Theorems 6.1 and 6.4]{Seo}, it was shown that certain ball bundles over ball quotients can be hyperconvex, where the base dimension agrees with the fiber dimension.
On the other hand, Ohsawa's theorem \cite[Theorem 0.1]{Ohsawa2} yields that the ball bundle cannot be 1-convex if the fiber dimension $n$ is one and the base dimension $> 1$ since the complement of a real analytic Levi-flat hypersurface in compact K\"ahler manifold of dimension $\geq 3$ cannot be 1-convex.
In personal communication, Takakura communicated to the authors that ball bundles are not Stein if the fiber dimension is smaller than the base dimension since 
$N$-dimensional Stein manifold is homotopy equivalent to at most real $N$-dimensional CW complex. 
In view of these partial results, we would like to propose the following conjecture. 

\begin{conjecture}
Let $M$ be a compact K\"ahler manifold of dimension $k$ and $E$ a holomorphic $\mathbb{B}^n$-bundle over $M$ embedded in the associated $\mathbb{CP}^n$-bundle $\widehat{E}$. Write $E' := \widehat{E} \setminus \overline{E}$.
If $n < k$, $E$ and $E'$ are not 1-convex and $n$-convex, respectively.
\end{conjecture}

\section{Preliminaries}
\subsection{Automorphisms of the unit ball}
We shall make use of standard automorphisms of the unit ball $\mathbb{B}^n := \{ z  = (z_1, \dots, z_n)\in \mathbb{C}^n \mid |z| := \sqrt{\sum_{j=1}^n |z_j|^2} < 1 \}$.
We also write the unit disk as $\mathbb{D} := \mathbb{B}^1$.

For $z \in \mathbb{B}^n \setminus \{0\}$, let $[z]$ denote the one dimensional vector space spanned by $z$. Let $P_z$ denote the orthogonal projection from $\mathbb{C}^n$ onto $[z]$, and let $Q_z$ be the orthogonal projection  onto the orthogonal complement of $[z]$, satisfying $P_{z}+Q_{z} = \textup{Id}_{\mathbb C^n}$. 
For each $z\in \mathbb B^n$, let $T_z$ be the automorphism on $\mathbb{B}^n$ mapping $0$ to $z$,  given by
\begin{equation}\label{involution on the ball}
T_z (w) = \frac{z-P_z (w) - \sqrt{1-|z|^2} Q_{z} (w)}{1- w \cdot \bar z}, 
\qquad w \in \mathbb{B}^n,
\end{equation}
where $z \cdot w := \sum_{j=1}^n z_j w_j$ for $z, w \in \mathbb{C}^n$. Then $T_z$ is an involution (i.e., $T_z \circ T_z = \text{Id}_{\mathbb{B}^n}$)
and it satisfies the identity
\begin{equation}\label{calabi diastasis}
1-|T_z w |^2 = \frac{(1-|z|^2)(1-|w|^2)}{|1- z \cdot \overline w |^2}.
\end{equation}

\subsection{Harmonic section}
We shall recall some basic properties and examples of harmonic maps. For more details, we refer the readers to \cite{Eells-Lemaire1, Eells-Lemaire2}, for instance.

Let $(M, g)$ and $(N, h)$ be Riemannian manifolds, 
and let $f \colon M \to N$ be a smooth map. 
The energy functional $E(f)$ of the map $f$ is defined by
$$
E(f) := \frac{1}{2} \int_M \text{tr}_g(f^* h) \, dV_g
$$
where $dV_g$ denotes the volume form of $M$ with respect to $g$.
The map $f$ is said to be {\it harmonic} if it satisfies the Euler-Lagrange equation for the energy functional $E$, which requires that its tension field vanishes:
$$
\text{tr}_g(\nabla df) = 0
$$
where $\nabla$ is the connection on $T^*M \otimes f^*TN$ induced by the Levi-Civita connections of $M$ and $N$, 
considering $df$ as a section of this bundle.
Clearly, any totally geodesic map (i.e., where $\nabla df \equiv 0$) is necessarily harmonic. 
By the celebrated theorem of Eells-Sampson~\cite{Eells-Sampson}, 
if $M$ is compact and $N$ has non-positive sectional curvature, 
then any smooth map from $M$ to $N$ is homotopic to a smooth harmonic map.
By a theorem of Hartman \cite[(H)]{Hartman}, if $N$ has strictly negative sectional curvature, any harmonic map $f \colon M \to N$ is unique within its homotopy class, provided there exists a point in $M$ at which the rank of $f$ is greater than one.

Now, suppose $M$ is a K\"ahler manifold. A smooth map $f \colon M \to N$ is said to be {\it pluriharmonic} if for every complex curve $C \subset M$, the restriction $f|_C \colon C \to N$ is harmonic. Equivalently, this means
$$
(\nabla df)^{1,1} = 0,
$$
where $(\nabla df)^{1,1}$ denotes the $(1,1)$-part of $\nabla df$, viewed as an $f^*TN$-valued $2$-form on $M$. 
It is a well-known fact that any holomorphic or anti-holomorphic map between K\"ahler manifolds is pluriharmonic, and every pluriharmonic map is harmonic. In general, converse is not true, however if $M$ is a compact K\"ahler manifold and $N$ is a Riemannian manifold of non-positive Hermitian sectional curvature, then any harmonic map from $M$ to $N$ is pluriharmonic \cite{Siu, Sampson}.
Moreover, if $N$ is also a compact K\"ahler manifold with strongly negative curvature, and if there exists a point in $M$ at which the real rank of $f$ is strictly greater than $2$, then any such harmonic map must be either holomorphic or anti-holomorphic \cite{Siu}.

Let $M$ be a compact Riemannian manifold and $G$ be a semisimple algebraic group. By a foundational theorem of Corlette \cite{Corlette}, a representation $\rho \colon \pi_1(M) \to G$ is reductive if and only if there exists a $\rho$-equivariant harmonic map from the universal cover to the symmetric space $G/K$, where $K$ is a maximal compact subgroup of $G$. Equivalently, this means that the representation $\rho$ is reductive if and only if the associated fiber bundle $M \times_\rho (G/K)$ admits a harmonic section.

{

In this paper, we investigate holomorphic $\mathbb{B}^n$-bundles over a compact K\"ahler manifold $M$ that admit a harmonic section. It is worth mentioning that such a bundle admits at most one harmonic section, as $\mathbb B^n$ is contractible.
When the base space $M$ is a compact K\"ahler manifold of complex dimension at least $2$, such a harmonic section is often forced to be holomorphic or anti-holomorphic due to rigidity phenomena. For instance, if $f \colon M \to N$ is a harmonic map into a quotient $N$ of $\mathbb{B}^n$, and the real rank of $f$ is greater than $2$ at some point in $M$, then $f$ is either holomorphic or anti-holomorphic \cite[Corollary 3.7]{Carlson-Toledo}. See also \cite{Corlette}.

On the other hand, When $\Sigma$ is a compact Riemann surface, there are plenty of examples where the bundle only admits harmonic section which is neither holomorphic nor anti-holomorphic. We shall give below basic examples based on \cite{Goldman-Kapovich-Leeb}. 
 }
\begin{example} \label{ex:complex}
Let $\Sigma$ be a compact Riemann surface of genus $g \geq 2$ and write its fundamental group $\pi_1(\Sigma)$ as $\Gamma$. Take a group homomorphism $\phi \colon \Gamma \to \operatorname{Aut}(\mathbb{D}) = PSU(1,1)$ from the  Teichm\"uller space of $\Sigma$, namely, we assume that $\mathbb{D}/\phi(\Gamma)$ is a compact Riemann surface of the same genus $g$.
We may choose such a $\phi$ so that $\mathbb{D}/\phi(\Gamma)$ is neither biholomorphic nor anti-biholomorphic to $\Sigma$. 
Then the harmonic diffeomorphism from $\Sigma$ to $\mathbb{D}/\phi(\Gamma)$ (see, e.g., \cite{Jost}) induces a harmonic section $s$ of the associated $\mathbb{D}$-bundle $\Sigma \times_\phi \mathbb{D}$ that is neither holomorphic nor anti-holomorphic.
It is a classical fact that there exists a lift of $\phi$ to $\widetilde{\phi} \colon \Gamma \to SU(1,1)$ (cf. \cite[Proposition 2.6]{Goldman-Kapovich-Leeb}). 
By composing ${\phi}$ with the natural homomorphism $ \psi \colon SU(1,1)  \hookrightarrow SU(n,1) \rightarrow PSU(n,1)$, we obtain a group homomorphism $\rho = \psi \circ \widetilde \phi \colon \Gamma \to \operatorname{Aut}(\mathbb{B}^n) =PSU(n,1)$. 
Since the holomorphic embedding $\mathbb{D} \to \mathbb{B}^n$ is $\psi$-equivariant, 
it induces a holomorphic embedding $\iota \colon \Sigma \times_\phi \mathbb{D} \to \Sigma \times_\rho \mathbb{B}^n$ of holomorphic fiber bundles. 
So, $\widetilde s = \iota \circ s$ is the unique harmonic section of $\Sigma \times_\rho \mathbb{B}^n$ which is neither holomorphic nor anti-holomorphic.
\end{example}

\begin{example} \label{ex:totallyreal}
Let $\Sigma$, $\Gamma$, $\phi$ and $s$ as in the previous example \ref{ex:complex}. 
Regard $\phi \colon \Gamma \to \operatorname{Aut}(\mathbb{D}) = SO(2,1)^o$, where $SO(2,1)^o$ denotes the identity component of $SO(2,1)$.
By composing ${\phi}$ with the natural homomorphism $ \psi \colon SO(2,1) \hookrightarrow SU(n,1) \rightarrow PSU(n,1)$, $n \geq 2$, we obtain a group homomorphism $\rho = \psi \circ\phi \colon \Gamma \to \operatorname{Aut}(\mathbb{B}^n) =PSU(n,1) \cong PU(n,1)$. 
Since the totally real totally geodesic embedding $\mathbb{D} \to \mathbb{B}^n$, $z \mapsto (\operatorname{Re} z, \operatorname{Im} z, 0, \dots, 0)$, is $\psi$-equivariant, it induces a smooth embedding $\iota \colon \Sigma \times_\phi \mathbb{D} \to \Sigma \times_\rho \mathbb{B}^n$ of smooth fiber bundles. 
We see  that $\widetilde s = \iota \circ s$ is the unique harmonic section of $\Sigma \times_\rho \mathbb{B}^n$ which is neither holomorphic nor anti-holomorphic.
\end{example}

\subsection{CR structures and D'Angelo forms} 
We shall explain our notation for CR geometry, in particular, D'Angelo $(1,0)$-form, and prove an intermediate pseudoconvexity result (Proposition \ref{prop:q-convexity}). Our formulation is based on \cite{Adachi-Yum} and \cite{Dallara-Mongodi}, primarily the latter. Some more details can be found in the paper of Dall'Ara and Mongodi \cite{Dallara-Mongodi}.

A \emph{CR manifold of hypersurface type} is an orientable smooth real manifold $M$ of dimension $2n+1$ equipped with a rank $n$ complex subbundle $T^{1,0}_M \subset T_{\mathbb C}(M)$ of the complexification of the real tangent bundle over $M$ such that $T^{1,0}_M \cap \overline{T^{1,0}_M}$ is of rank zero and the integrability condition, $[\Gamma(T^{1,0}_M), \Gamma(T^{1,0}_M) ] \subset \Gamma(T^{1,0}_M)$, is fulfilled. 
We write $H(M) :=\operatorname{Re}(T^{1,0}_M)$.
A \emph{pseudo-hermitian structure} on $M$ is a non-vanishing smooth real 1-form $\theta$ on $M$ that annihilates $H(M)$. A real smooth vector field $T$ on $M$ is said to be \emph{$\theta$-normalized} if $\theta(T) = 1$. 
Such a vector field gives smooth direct sum decompositions
$$T_\mathbb{C}(M) = T^{1,0}_M \oplus \overline{T^{1,0}_M} \oplus \mathbb{C} T, \quad T(M) = H(M) \oplus \mathbb{R} T.$$
Note that in some papers including \cite{Adachi-Yum} use pure-imaginary 1-forms and vector fields instead of real ones.

\begin{example} 
Let $\Omega$ be a relatively compact domain in a complex manifold $\widehat{\Omega}$ with smooth boundary $M$. The smooth real hypersurface $M$ is canonically endowed with a CR structure $T^{1,0}_M := T^{1,0}_{\widehat{\Omega}} \cap T_\mathbb{C}(M)$ of hypersurface type.  
A smooth defining function $r_0$ of $\Omega$, i.e., a smooth function $r_0 \colon \widehat{\Omega} \to \mathbb{R}$ such that $\Omega = r_0^{-1}((-\infty, 0))$ and $dr_0 \neq 0$ on $M$, naturally induces a pseudo-hermitian structure $\theta_{r_0} = \sqrt{-1} (\partial - \overline{\partial})r_0$ on $M$. 
This $\theta_{r_0}$ is \emph{compatible with the orientation} of the boundary $M$ in the sense that, for any $\theta_{r_0}$-normalized vector field $T$, $J_{\widehat\Omega}(T)$ is outward, $dr_0(J_{\widehat\Omega}(T)) = \theta(T) = 1$, where $J_{\widehat\Omega}$ denotes the complex structure of $\widehat{\Omega}$.
Any other pseudo-hermitian structure $\theta$ on $M$ can be written as $\theta = f \theta_{r_0}$ for some non-vanishing real smooth function $f$ on $M$. If $f > 0$, then $\theta$ is also compatible with the orientation of $M$ and induced from another defining function $r$ of $\Omega$, so that $\theta = \theta_r$ (cf. \cite[Lemma 3.1] {Adachi-Yum} or \cite[Proposition 3.7]{Dallara-Mongodi}). 
\end{example}

For a CR manifold $M$ with pseudo-hermitian structure $\theta$, its \emph{Levi form} $\lambda_p$ at $p \in M$ is the hermitian form on $T^{1,0}_{M,p}$ defined by
\[
\lambda_p(X_p, Y_p) := \frac{\sqrt{-1}}{4}\theta_p([X, \overline{Y}]_p) \quad \text{for $X_p, Y_p \in T^{1,0}_{M,p}$},
\]
where $X$ and $Y$ denote local smooth sections of $T^{1,0}_M$ extending $X_p$ and $Y_p$ respectively. 
We say that $(M, \theta)$ is \emph{pseudoconvex} (resp., \emph{pseudoconcave}) when its Levi form is semi-positive (resp., semi-negative) everywhere on $M$. 
When $M$ is a boundary of a domain, we simply say that $M$ is pseudoconvex or pseudoconcave since the semi-positivity or semi-negativity of the Levi form is independent of the choice of $\theta$ which is compatible with the orientation of $M$. 

The \emph{D'Angelo $(1,0)$-form} $\alpha_{1,0}$ is a smooth differential form of type $(1,0)$ defined by
$$
\alpha_{1,0}(X_p) := \theta_p([T,X]_p)\quad \text{for $X_p \in T^{1,0}_{M,p}$},
$$
where $T$ is a $\theta$-normalized vector field defined in a neighborhood of $p$.
Note that, on the kernel of the Levi form $\operatorname{Ker} \lambda_p \subset T^{1,0}_{M,p}$, the D'Angelo $(1,0)$-form $\alpha_{1,0}$ is independent of the choice of the local $\theta$-normalized vector field $T$ (cf. \cite[Lemma 2.5]{Adachi-Yum} or \cite[Proposition-Definition 4.1]{Dallara-Mongodi}).

For domains in complex manifolds with smooth pseudoconvex boundary, based on the breakthrough work of Liu \cite{Liu}, Yum \cite{Yum} found a formula to express the Diederich--Forn{\ae}ss index of the domain in terms of the D'Angelo $(1,0)$-form $\alpha_{1,0}$ on the boundary. 
The formula involves a hermitian form $\overline{\partial} \alpha_{1,0}$ on the kernel of the Levi form $\operatorname{Ker} \lambda$, where $\overline{\partial}$ denotes the tangential Cauchy--Riemann operator. 
The fact that $\overline{\partial}\alpha_{1,0}$ induces a hermtian form follows from the pseudoconvexity of $M$ based on an observation by Boas and Straube in \cite{Boas-Straube} (cf. \cite[Lemma 2.6]{Adachi-Yum} or \cite[Proposition-Definition 4.1, Proposition 4.2]{Dallara-Mongodi}).

The following proposition is implicitly contained in the formula for the Diederich--Forn{\ae}ss index.

\begin{proposition}[cf. {\cite[Corollary 5.3 (1)]{Adachi-Yum}}] \label{prop:adachi-yum}
Let $\Omega$ be a relatively compact domain in a complex manifold with smooth pseudoconvex boundary $M$. 
Assume that $M$ admits a pseudo-hermitian structure $\theta$ such that $\overline{\partial}\alpha_{1,0} > 0$ holds on $\operatorname{Ker} \lambda$. Then the Diedeirch--Forn{\ae}ss index of $\Omega$ is positive, namely, there exists a smooth defining function $r$ of $\Omega$ and $\eta \in (0,1)$ such that $-(-r)^\eta$ is strictly plurisubharmonic on $\Omega \setminus K$ for some compact subset $K \subset \Omega$. In particular, $-\log(-r)$ is also strictly plurisubharmonic on $\Omega \setminus K$ and $\Omega$ is 1-convex. 
\end{proposition}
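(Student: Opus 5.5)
The plan is to construct the defining function $r$ directly from $\theta$ and then check, by a Taylor expansion in the normal direction, that $\rho := -(-r)^\eta$ is strictly plurisubharmonic near $M$ once $\eta>0$ is small enough. This is the mechanism behind Yum's formula for the Diederich--Forn{\ae}ss index.

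\textbf{Step 1: the defining function and the Hessian of $\rho$.} After replacing $\theta$ by $-\theta$ if necessary, I may assume $\theta$ is compatible with the orientation. The sign flip changes $\alpha_{1,0}$ only by an exact term that does not affect $\overline{\partial}\alpha_{1,0}$ on $\operatorname{Ker}\lambda$, and I would double-check this sign convention. By the example preceding the statement there is then a defining function $r$ with $\theta = \theta_r = \sqrt{-1}(\partial-\overline{\partial})r$ on $M$. On $\Omega$ near $M$ one has
\[
\sqrt{-1}\partial\overline{\partial}\rho = \eta(-r)^{\eta-1}\Bigl(\sqrt{-1}\partial\overline{\partial}r + (1-\eta)\frac{\sqrt{-1}\,\partial r\wedge\overline{\partial}r}{-r}\Bigr).
\]
Near $M$ I fix a smooth $(1,0)$ vector field $N$ with $\partial r(N)=1$, chosen so that its real part is a $\theta$-normalized transversal. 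I extend $T^{1,0}_M$ to the level sets $\{r=t\}$ as $\ker\partial r$. Writing any $v = L + cN$ with $\partial r(L)=0$ and $H := \sqrt{-1}\partial\overline{\partial}r$, positivity of the bracket is equivalent to
\[
H(L,\bar L)\Bigl(H(N,\bar N) + \tfrac{1-\eta}{-r}\Bigr) > |H(L,\bar N)|^2 \quad\text{for } L\neq 0,
\]
together with positivity of the $|c|^2$ coefficient. The latter is automatic close to $M$ because of the $(1-\eta)/(-r)$ term.

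\textbf{Step 2: expansion in the normal direction.} I would identify the terms to first order in $-r$. On $M$, $H(L,\bar L)$ is a positive multiple of $\lambda(L,L)$. The mixed term $H(L,\bar N)|_M$ equals, up to a universal constant, $\alpha_{1,0}(L)$; this is the defining property of the D'Angelo form computed with $T=\operatorname{Re}N$. For $L\in\operatorname{Ker}\lambda$, differentiating the Levi form in the normal direction gives
\[
H(L,\bar L) = (-r)\,c_0\,\overline{\partial}\alpha_{1,0}(L,\bar L) + O(r^2)
\]
with a universal constant $c_0>0$. This uses pseudoconvexity of $M$ together with the Boas--Straube observation recalled before the statement, and it is the computation of \cite[Section 2]{Adachi-Yum} and \cite[Section 4]{Dallara-Mongodi}. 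Substituting these into the inequality of Step 1 on kernel directions, the required condition becomes
\[
c_0\,(1-\eta)\,\overline{\partial}\alpha_{1,0}(L,\bar L) > c_1\,|\alpha_{1,0}(L)|^2 + O(-r)
\]
for a universal constant $c_1>0$.

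\textbf{Step 3: choice of $\eta$ and of the neighborhood.} By compactness of $M$ and the hypothesis $\overline{\partial}\alpha_{1,0}>0$ on $\operatorname{Ker}\lambda$, there is $\delta>0$ with $\overline{\partial}\alpha_{1,0}(L,\bar L)\ge\delta|L|^2$ on the kernel. Note that $\eta$ enters the mixed term: the factor $\eta$ in front of the bracket rescales everything, and the honest form of the condition is Yum's $\overline{\partial}\alpha_{1,0} - \frac{\eta}{1-\eta}|\alpha_{1,0}|^2>0$ on $\operatorname{Ker}\lambda$. To see this, one writes $v$ with $c$ of size comparable to $(-r)$ and compares with the $\eta$-dependent cross term. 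That form holds for all sufficiently small $\eta$, since $|\alpha_{1,0}|$ is bounded on compact $M$.

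For directions transverse to the kernel, $\lambda$ is strictly positive. The inequality then holds with room to spare near $M$, because $H(L,\bar L)\gtrsim |L|^2$ while the right-hand side is bounded and the normal factor blows up like $1/(-r)$. A uniform cone argument over the compact unit sphere bundle combines the two regimes and gives a collar $\{-\epsilon<r<0\}$ on which $\rho$ is strictly plurisubharmonic. Setting $K=\{r\le-\epsilon\}$ proves the first claim.

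\textbf{Step 4: consequences.} Since $-\log(-\rho)$ is strictly plurisubharmonic wherever $\rho<0$ is, and $-\log(-\rho) = -\eta\log(-r)$, the function $-\log(-r)$ is strictly plurisubharmonic on $\Omega\setminus K$. It tends to $+\infty$ at $M$. Modifying it on a compact set, for instance by taking a regularized maximum with a large constant, yields an exhaustion that is strictly plurisubharmonic outside a compact subset, so $\Omega$ is 1-convex.

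\textbf{Main obstacle.} The delicate step is Step 2: verifying precisely that the normal derivative of the Levi form on the kernel is $\overline{\partial}\alpha_{1,0}$, with the correct sign and normalization. The balancing of the mixed term against it in Step 3 also requires care. Here I would follow the computation in \cite[Corollary 5.3]{Adachi-Yum} closely.
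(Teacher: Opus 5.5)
\textbf{There is a genuine gap, in Step 2.} Your strategy is sound: choose $r$ with $\theta_r=\theta$ and analyze the Schur complement of $\sqrt{-1}\partial\overline{\partial}\bigl(-(-r)^\eta\bigr)$ near $M$. This is the mechanism behind Yum's formula, which is all the paper relies on; it gives no proof and simply invokes \cite[Corollary 5.3 (1)]{Adachi-Yum}. The problem is the normal expansion itself. Take $L$ with $\partial r(L)=0$ extending a vector of $\operatorname{Ker}\lambda$, and write $H(X,\overline{Y})=\sum r_{j\bar k}X^j\overline{Y^k}$. The inward normal derivative of $H(L,\overline{L})$ is not a multiple of $\overline{\partial}\alpha_{1,0}(L,\overline{L})$ alone. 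In the normalization where $H(L,\overline{N})|_M=\alpha_{1,0}(L)$ one has
\[
H(L,\overline{L})=(-r)\bigl(\overline{\partial}\alpha_{1,0}(L,\overline{L})+|\alpha_{1,0}(L)|^2\bigr)+O(r^2).
\]
You can check this locally on $\{\operatorname{Re}w<0\}\subset\mathbb{C}^2$ with $r=e^{\psi(z)}\operatorname{Re}w$:
\begin{itemize}
\item here $\alpha_{1,0}=\partial\psi$ and $\overline{\partial}\alpha_{1,0}(\partial_z,\partial_{\bar z})=-\psi_{z\bar z}$, using the sign convention of the proof of Theorem~\ref{cor:1-n-covex};
\item with $u=-\operatorname{Re}w$, $L=\partial_z+2u\psi_z\partial_w$ and $N=2e^{-\psi}\partial_w$, one computes exactly $H(L,\overline{L})=(-r)(-\psi_{z\bar z}+|\psi_z|^2)$ and $H(L,\overline{N})|_M=\psi_z$.
\end{itemize}
The same cancellation appears in the paper's proof of Proposition~\ref{prop:q-convexity}. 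There $d_pQ(\operatorname{Re}\tau)=-\overline{\partial}\alpha_{1,0}(\xi_p,\overline{\xi}_p)$ holds only because differentiating the factor $(-r)$ produces a term $|\partial\overline{\partial}r(\xi_p,\overline{\tau}_p)|^2$, and this is cancelled by part of the normal derivative of $\partial\overline{\partial}r(\xi,\overline{\xi})$.

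\textbf{Consequence for your argument.} Inserting the correct expansion into your Schur-complement inequality gives, to leading order, $(1-\eta)\bigl(\overline{\partial}\alpha_{1,0}+|\alpha_{1,0}|^2\bigr)>|\alpha_{1,0}|^2$. This is $\overline{\partial}\alpha_{1,0}>\frac{\eta}{1-\eta}|\alpha_{1,0}|^2$ on $\operatorname{Ker}\lambda$, which holds for small $\eta$ by compactness; this is exactly where the hypothesis enters. Your expansion instead gives $c_0(1-\eta)\overline{\partial}\alpha_{1,0}>c_1|\alpha_{1,0}|^2$. That condition does not improve as $\eta\to 0$ and is not implied by the hypothesis: take $\psi=-\epsilon|z|^2+\operatorname{Re}z$ above with $\epsilon$ small.

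Step 3 then asserts Yum's condition, but the reasons you give cannot produce it. The prefactor $\eta(-r)^{\eta-1}$ is positive and plays no role in the sign. The Schur complement already minimizes over the normal coefficient $c$, so choosing $c\sim(-r)$ cannot weaken the requirement. The one step that actually uses the hypothesis is therefore missing. Replace it with the identity above (the computation in \cite[\S6]{Dallara-Mongodi}); after that, your compactness/cone argument and Step 4 go through.

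\textbf{Two smaller slips.} With $\partial r(N)=1$, it is $\operatorname{Im}N$, not $\operatorname{Re}N$, that is $\theta_r$-normalized; $\operatorname{Re}N$ is the outward normal. Also, $\theta\mapsto-\theta$ leaves $\alpha_{1,0}$ literally unchanged, since $T\mapsto-T$ as well.
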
 

In a similar vein, the following intermediate pseudoconvexity holds for domains with \emph{pseudoconcave} boundary.

\begin{proposition}
\label{prop:q-convexity}
Let $\Omega$ be a relatively compact domain in a complex manifold $\widehat{\Omega}$ with smooth pseudoconcave boundary $M$. 
Assume that $M$ admits a pseudo-hermitian structure $\theta$ and a smooth subbundle $\mathcal{K} \subset T^{1,0}_{M}$ of positive rank $\kappa$ such that $\lambda = 0$ and $\overline{\partial}\alpha_{1,0} > 0$ hold on $\mathcal{K}$. Then $\Omega$ is $(\dim_\mathbb{C} \Omega - \kappa)$-convex. More precisely, there exists a smooth defining function $r$ of $\Omega$ such that the Levi form of $-\log (-r)$ has at least $(\kappa +1)$ positive eigenvalues on $\Omega \setminus K$ for some compact subset $K \subset \Omega$. 
\end{proposition}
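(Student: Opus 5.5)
Throughout, write $N := \dim_\mathbb{C}\Omega$ and let $\rho$ be a smooth function on $\widehat\Omega$ that vanishes on $M$ and is negative on $\Omega$, with $d\rho \neq 0$ on $M$. The idea is the standard Diederich--Forn{\ae}ss computation behind Proposition \ref{prop:adachi-yum}, but carried out only along $\mathcal{K}$; elsewhere we use only the trivial direction transverse to $M$.

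\emph{Step 1: choice of defining function.} First I would write the given $\theta$ in the form $\theta = \theta_{r}$ for a defining function $r$ of $\Omega$. If $\theta$ is not compatible with the orientation, I would replace it by $-\theta$. This changes $\alpha_{1,0}$ only through $T \mapsto -T$ together with $\theta \mapsto -\theta$, so $\alpha_{1,0}$ and $\overline\partial\alpha_{1,0}$ on $\mathcal{K}$ are unchanged. Next I would modify $r$ to $\tilde r = r e^{-\psi}$, where $\psi$ vanishes on $M$ but its normal derivative is free to choose. This affects only second order data off $M$. Set $\varphi = -\log(-\tilde r)$. In a collar neighborhood of $M$,
\[
\partial\overline\partial\varphi = \frac{\partial\overline\partial \tilde r}{-\tilde r} + \frac{\partial \tilde r\wedge\overline\partial \tilde r}{\tilde r^2}.
\]

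\emph{Step 2: the local computation.} At a point $z$ near $M$, with nearest boundary point $p$, I would extend $\mathcal{K}$ to a subbundle $\mathcal{K}_z \subset \ker\partial\tilde r(z)$. I would then work on the space $\mathcal{K}_z \oplus \mathbb{C}L_N$, where $L_N$ is the complex normal direction. The key facts are:
\begin{enumerate}
\item $\lambda = 0$ on $\mathcal{K}$ at $M$, so $\partial\overline\partial \tilde r|_{\mathcal{K}_z} = O(|\tilde r|)$.
\item The $O(|\tilde r|)$ term equals $-\tilde r$ times the hermitian form $\overline\partial\alpha_{1,0}$ on $\mathcal{K}$, up to a correction coming from $\psi$ and the cross terms.
\end{enumerate}
This is exactly the Liu--Yum identity used for Proposition \ref{prop:adachi-yum}. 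Since $\lambda|_{\mathcal{K}} = 0$, the identification of the normal derivative of the Levi form with $\overline\partial\alpha_{1,0} - \alpha_{1,0}\wedge\overline{\alpha_{1,0}}$ holds on $\mathcal{K}$ without global pseudoconvexity; I would verify this via \cite{Dallara-Mongodi}. Combined with the choice of $\psi$ (or a multiplier $e^{-A\psi}$ in the normal direction), this gives
\[
\frac{\partial\overline\partial\tilde r}{-\tilde r}\Big|_{\mathcal{K}_z} \geq c>0 .
\]
The normal direction contributes $|\partial\tilde r(L_N)|^2/\tilde r^2$, which blows up. The mixed terms between $\mathcal{K}_z$ and $L_N$ are $O(1/|\tilde r|)$, so they are absorbed by Cauchy--Schwarz. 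Hence $\partial\overline\partial\varphi$ is positive definite on a $(\kappa+1)$-dimensional space near $M$.

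\emph{Step 3: globalization.} By the min-max principle, $\partial\overline\partial\varphi$ then has at least $\kappa+1$ positive eigenvalues, hence at most $N-\kappa-1$ non-positive ones, outside a compact $K \subset \Omega$. Since $\varphi$ is an exhaustion of $\Omega$, it follows that $\Omega$ is $(N-\kappa)$-convex.

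\emph{Main obstacle.} The hard step is Step 2: showing that the second-order term of $\partial\overline\partial\tilde r$ on $\mathcal{K}$ is governed by $\overline\partial\alpha_{1,0}$, even though the Levi form is negative on the complement of $\mathcal{K}$. I would handle this with a pointwise Taylor expansion along the normal flow of $J T$, using only $\lambda|_{\mathcal{K}}=0$ and semi-negativity. The latter shows that $\mathcal{K}$ lies in the null space, so the mixed Levi terms $\lambda(\mathcal{K},\cdot)$ vanish. This makes $\overline\partial\alpha_{1,0}$ well defined and hermitian on $\mathcal{K}$, as in the Boas--Straube observation.
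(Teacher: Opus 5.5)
Your set-up is the same as the paper's: normalize $\theta=\theta_r$, test $\partial\overline\partial(-\log(-r))$ on $\mathcal K$ extended into $\ker\partial r$ plus one complex normal direction, then apply min--max. But Step~2 breaks at exactly the point where the hypothesis has to enter: the cross terms cannot be absorbed by Cauchy--Schwarz. Write $\delta=-r$, take $\xi\in\mathcal K_z\subset\ker\partial r$ and $\tau$ (your $L_N$) with $\partial r(\tau)=1$, and set $A_{\xi\bar\xi}=\partial\overline\partial r(\xi,\bar\xi)$, etc. Then on $\operatorname{span}\{\xi,\tau\}$ the form $\partial\overline\partial(-\log(-r))$ is
\[
\frac{1}{\delta}\begin{pmatrix} A_{\xi\bar\xi} & A_{\xi\bar\tau}\\ A_{\tau\bar\xi} & A_{\tau\bar\tau}+c/\delta \end{pmatrix},
\]
where $c>0$ is a normalizing constant. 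The $\mathcal K$-entry is $O(1)$, the normal entry is of order $\delta^{-2}$, and the cross entry is of order $\delta^{-1}$. That is precisely the geometric mean of the two diagonal orders, so the cross term is not negligible. If $A_{\xi\bar\xi}=a_0\delta+O(\delta^2)$, the sign of the determinant near $M$ is governed by $c\,a_0-|A_{\xi\bar\tau}|^2$ on $M$, not by $a_0>0$. This Schur-complement coefficient (the paper's $Q(p;a)/\delta$ to leading order) is, up to a positive factor, $\overline\partial\alpha_{1,0}(\xi,\bar\xi)$. The inward rate $c\,a_0$ of the Levi form on $\mathcal K$ is $\overline\partial\alpha_{1,0}(\xi,\bar\xi)$ plus the square of the boundary value of the cross term (essentially $|\alpha_{1,0}(\xi)|^2$), and the cross term cancels that square exactly.

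A concrete counterexample to the absorption step: in $\mathbb{C}^2$ take $r=e^{\phi(z)}\operatorname{Re}w$ with $\phi=2\operatorname{Re}z+\epsilon|z|^2$ and $0<\epsilon<1$.
\begin{itemize}
\item The boundary $\{\operatorname{Re}w=0\}$ is Levi-flat, and $\xi=\partial_z-2(\operatorname{Re}w)\phi_z\partial_w$ spans $\ker\partial r$.
\item The block satisfies $\partial\overline\partial r(\xi,\bar\xi)/(-r)=|\phi_z|^2-\epsilon>0$ near $z=0$, and the cross terms are $O(1/\delta)$.
\item Yet $\partial\overline\partial(-\log(-r))=-\epsilon\,dz\wedge d\bar z+dw\wedge d\bar w/(4(\operatorname{Re}w)^2)$ has only one positive eigenvalue.
\item Here $\alpha_{1,0}(\partial_z)=\phi_z$ and $\overline\partial\alpha_{1,0}=-\epsilon\,dz\wedge d\bar z$ is negative, so the correct criterion predicts this failure.
\end{itemize}

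The factor $e^{-\psi}$ cannot repair the estimate. If $\psi|_M=0$ then $\tilde r=r+O(r^2)$. Hence $\theta$, $\alpha_{1,0}$, the boundary value of $\partial\overline\partial\tilde r(\xi,\bar\tau)$, and the first-order part of $\partial\overline\partial\tilde r|_{\mathcal K_z}$ are all unchanged; this is consistent with $\overline\partial\alpha_{1,0}$ depending only on $\theta$. Your formula $\overline\partial\alpha_{1,0}-\alpha_{1,0}\wedge\overline{\alpha_{1,0}}$ also causes trouble either way. If the quadratic term really enters with the sign opposite to $\overline\partial\alpha_{1,0}$, your block bound would require $\overline\partial\alpha_{1,0}>|\alpha_{1,0}|^2$, which is not assumed. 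With the paper's conventions it actually enters with a plus, which makes the block bound true but, as shown above, insufficient.

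The missing step is the paper's:
\begin{itemize}
\item apply Sylvester's criterion to the $(\kappa+1)\times(\kappa+1)$ matrix, reducing to the Schur complement $Q(p;a)=(-r)\bigl(A_{\xi\bar\xi}A_{\tau\bar\tau}-|A_{\xi\bar\tau}|^2\bigr)+c\,A_{\xi\bar\xi}$;
\item note $Q=0$ on $M$, since $\lambda|_{\mathcal K}=0$;
\item compute, as in Dall'Ara--Mongodi, that its derivative along the outward field $\operatorname{Re}\tau$ equals $-\overline\partial\alpha_{1,0}(\xi,\bar\xi)<0$;
\item conclude $Q>0$ on a one-sided neighborhood, uniformly over unit $a\in\mathbb{C}^\kappa$ by compactness.
\end{itemize}
Your remark that semi-negativity forces $\lambda(\mathcal K,\cdot)=0$, so that $\overline\partial\alpha_{1,0}$ is a well-defined hermitian form on $\mathcal K$, is correct. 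That computation relies on it.
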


\begin{proof}
We may suppose that $\theta$ is compatible with the orientation of $M$ by replacing $\theta$ with $-\theta$ if necessary. Let $r$ be a defining function of $\Omega$ such that $\theta = \theta_r$. 
%For simplicity, we let $\delta := -r$. 
We shall show that $-\log (-r)$ has the desired property by adapting the argument in \cite[\S6.2]{Dallara-Mongodi}, which we follow almost line by line.

Take a boundary point $p_0 \in M$ and a smooth $(1,0)$-vector field $\tau$ near $p_0$ in $\widehat{\Omega}$ such that $dr(\tau) = 1$.
Also take a local smooth frame of $\mathcal{K}$ near $p_0$ in $M$ and extend it to linearly independent smooth $(1,0)$-vector fields $\{\xi_1, \cdots, \xi_\kappa \}$ near $p_0$ in $\widehat{\Omega}$ so that $dr(\xi_i) = 0$ for any $j = 1,\dots,\kappa$. We write $\xi_{\kappa+1} := \tau$.
In view of the min-max principle, it is enough to find a neighborhood $V$ of $p_0$ in $\widehat{\Omega}$ such that $\sqrt{-1}\partial\overline{\partial}(-\log (-r))$ is positive-definite on the subspace $\{(\xi_1)_p, \cdots, (\xi_\kappa)_p, (\xi_{\kappa+1})_p = \tau_p \}$ of $T^{1,0}_{\Omega,p}$ at every $p \in V \cap \Omega$.
Since
$$
(-r)\partial\overline{\partial}(-\log (-r))
= \partial\overline{\partial}r + 
\frac{\partial r \wedge \overline{\partial} r}{(-r)},
$$
it holds that
$$
\biggl[ (-r) \partial\overline\partial (-\log(-r))((\xi_j)_p, (\overline{\xi}_k)_p)\biggr] = \biggl[  \partial\overline\partial r((\xi_j)_p, (\overline{\xi}_k)_p) + \frac{1}{2}\frac{\delta_{j,\kappa+1}\delta_{k,\kappa+1}}{(-r)}\biggr],
$$
where $\delta_{j,k}$ denotes the Kronecker delta. 
Notice that $\partial\overline\partial r(\tau_p, \overline{\tau}_p) + 1/(-2r) > 0$ on $V \cap \Omega$ if we take $V$ enough small. 
Thus, by Sylvester's criterion, this matrix is positive-definite if and only if
$$
(-r)(\partial\overline{\partial} r (\xi_p,\overline{\xi}_p)\partial\overline{\partial} r (\tau_p,\overline{\tau}_p) - |\partial\overline{\partial} r (\xi_p,\overline{\tau}_p)|^2) + \frac{1}{2}\partial\overline{\partial} r (\xi_p,\overline{\xi}_p)> 0
$$
for any non-zero $\xi_p \in \operatorname{span}\{(\xi_1)_p, \cdots, (\xi_\kappa)_p \}$.
Writing $\xi_p = \sum_{j=1}^\kappa a_j \cdot (\xi_j)_p$ by $a \in \mathbb{C}^{\kappa}$, we regard this expression as a quadratic form $Q(p; a)$ on $\mathbb{C}^\kappa$ depending smoothly on the point $p$ in a neighborhood of $p_0$ in $\widehat{\Omega}$.
Since $\lambda(\xi_p, \overline{\xi}_p) = 0$ when $p \in M$, exactly the same computation as in \cite[\S6.1 \& \S6.2]{Dallara-Mongodi} yields
$$
Q(p;a) = 0, \quad d_pQ(p;a)(\operatorname{Re} \tau) = -\overline{\partial}\alpha_{1,0}(\xi_p,\overline{\xi}_p) < 0
$$
when $p \in M$. 
From \cite[Lemma 6.9]{Dallara-Mongodi}, with orientation taken into account, we find a neighborhood $V$ of $p_0$ in $\widehat{\Omega}$ such that $Q(p;a) > 0$ for every $p \in V \cap \Omega$ and non-zero $a \in \mathbb{C}^\kappa$. 
This completes the proof. 
\end{proof}

\section{Semi-positivity of the CR normal line bundle}\label{sect:positivity}

In this section, we study the CR normal line bundles of flat sphere bundles over K\"ahler manifolds and prove their leafwise semi-positivity under the assumption that the associated ball bundle admits a harmonic section (Proposition \ref{prop:semi-positive}).

Let $\Sigma$ be a K\"ahler manifold of arbitrary dimension and $\pi_E \colon E \to \Sigma$ a holomorphic fiber bundle over $\Sigma$ with fiber the unit ball $\mathbb{B}^n$. Since $\mathbb{B}^n$ is Kobayashi hyperbolic, the structure group reduces to $\mathrm{Aut}(\mathbb{B}^n)$, and any such bundle arises from a representation of $\pi_1(\Sigma)$. Consequently, there exists a homomorphism
$$
\rho \colon \pi_1(\Sigma) \to \mathrm{Aut}(\mathbb{B}^n)
$$
such that $E$ can be realized as the quotient
$$
E \cong (\widetilde{\Sigma} \times \mathbb{B}^n)/\sim,
$$
where $\widetilde{\Sigma}$ denotes the universal cover of $\Sigma$, and the equivalence relation is given by
$$
(\zeta,w) \sim (\zeta',w')
\quad\text{if and only if}\quad
\exists\, \gamma \in \pi_1(\Sigma)
\ \text{such that}\
\zeta' = \gamma \cdot \zeta,\quad
w' = \rho(\gamma)\cdot w.
$$
Let 
$
\widehat E := (\widetilde \Sigma \times \mathbb{CP}^n)/\sim
$
be the total space of the associated $\mathbb{CP}^n$-bundle $\pi \colon \widehat E \to \Sigma$ obtained from the same representation, which contains $E$ as a locally pseudoconvex domain.

Let $\partial E = (\widetilde \Sigma \times \partial\mathbb B^n)/\sim$ be the boundary of $E$ in $\widehat E$, a sphere bundle $\pi_{\partial E} \colon \partial E \to \Sigma$ over $\Sigma$.
Consider on $\widetilde \Sigma\times \partial\mathbb B^n$ the product foliation whose leaves are the slices
$$
L_{w}:=\widetilde \Sigma\times\{w\},\qquad w \in\partial\mathbb B^n.
$$
The $\pi_1(\Sigma)$ action $
g\cdot(\zeta,w):=(\gamma\cdot\zeta,\ \rho(\gamma)\cdot w)
$
preserves this foliation, hence it descends to a well-defined foliation $\mathcal F$ on $\partial E$. 

Write $T_{\mathbb{C}} (\partial E)$ for the complexification of the real tangent bundle over $\partial E$. The CR holomorphic tangent bundle $T^{1,0}_{\partial E}$ is defined by 
\[
T^{1,0}_{\partial E} :=  T^{1,0}_{\widehat{E}} \cap T_{\mathbb{C}} (\partial E)
\]
where $T^{1,0}_{\widehat{E}}$ denotes the holomorphic tangent bundle of the ambient space $\widehat{E}$. 
The holomorphic tangent bundle $T^{1,0}_\mathcal{F}$ of the leaves of the foliation $\mathcal F$ gives a CR subbundle of $T^{1,0}_{\partial E}$, which agrees with the kernel of the Levi form.
The intersection with the vertical holomorphic tangent bundle $\ker d\pi \subset T^{1,0}_{\widehat E}$ gives another CR subbundle
$$
\mathcal E := \ker d\pi \cap T^{1,0}_{\partial E},
$$
and $T^{1,0}_{\partial E}$ splits into these two CR subbundles:
$$
T^{1,0}_{\partial E} = \mathcal{E} \oplus T^{1,0}_{\mathcal F}.
$$
Since $\mathcal E$ agrees with the CR holomorphic tangent bundle of the boundary sphere in each fiber, the Levi form is positive definite on $\mathcal E$. 
The real hypersurface $\partial E$ is therefore pseudoconvex of constant Levi rank $(n-1)$. 

Now we consider normal bundles. First we consider the holomorphic normal bundle of the foliation $\mathcal{F}$ given by
$$
 N^{1,0}_{\mathcal F} := (T^{1,0}_{\widehat E}|_{\partial E})/T^{1,0}_\mathcal F \simeq (\ker d\pi) |_{\partial E},
$$
which is CR isomorphic to the restriction of the vertical  holomorphic tangent bundle.
Also we consider the CR normal line bundle 
$$
\mathcal{N}:=(T^{1,0}_{\widehat{E}}|_{\partial E}) / T^{1,0}_{\partial E} \simeq N^{1,0}_{\mathcal F}/\mathcal E.
$$
Note that these CR vector bundles $N^{1,0}_{\mathcal F}$ and $\mathcal{N}$ can be seen as leafwise holomorphic bundles along $\mathcal{F}$ since CR functions are holomorphic along the leaves of $\mathcal{F}$.

We shall construct hermitian metrics on $N^{1,0}_{\mathcal F}$ and $\mathcal{N}$ from a given smooth section $s \colon \Sigma \rightarrow E$ of the $\mathbb{B}^n$-bundle $E$. Let $\widetilde D \subset \widetilde \Sigma$ be the interior of a fundamental domain for the action of $\pi_1(\Sigma)$ and $D \subset \Sigma$ is the open image of $\widetilde D$ by the canonical projection $\widetilde \Sigma \rightarrow \Sigma$. 
Then, using the canonical projection $\widetilde{\Sigma} \times \mathbb{B}^n \to E$, we obtain a biholomorphism $E |_{\pi^{-1}_{E} (D)} \cong D\times \mathbb{B}^n $.
 We regard $s$ as $\mathbb{B}^n$-valued smooth function over $D$ using this coordinate.
For each $\zeta \in D$, we note that $T_{s(\zeta)} : \mathbb{B}^n \rightarrow \mathbb{B}^n$ is extended biholomorphically across over the boundary of $\mathbb{B}^n$, i.e., there exists two open sets $\overline{\mathbb{B}^n} \subset V_j^{\zeta} \subset \mathbb{C}^n$, $j = 1, 2$, and $T_{s (\zeta)} : V_1^\zeta \rightarrow V_2^\zeta$ is a biholomorphism.
We define a hermitian metric on $N^{1,0}_{\mathcal{F}} |_{\pi_{\partial E}^{-1} (D)}$ by identifying $N^{1,0}_{\mathcal{F}} \big|_{\pi_{\partial E}^{-1}(\zeta)}  \cong T^{1,0}_{V_1^{\zeta}}\big|_{\partial \mathbb{B}^n} $ and using the hermitian metric of $V_1^\zeta$ given by
\begin{equation}\label{fiberwise metric}
g_{\zeta} := T_{s(\zeta)}^* \big(g_{\mathbb{C}^n} \big)
\end{equation}
{where 
$$
g_{\mathbb{C}^n} = \sqrt{-1} \sum_{\ell=1}^{n} dt_\ell \wedge d \bar t_{\ell}
$$
and $(t_1, \cdots, t_n)$ is the standard coordinate system on $V_2^\zeta$. }

\begin{lemma}\label{lem:metric}
For any $\gamma \in \pi_1 (\Sigma)$,
$$
g_{\zeta} = \rho(\gamma)^* g_{\gamma \zeta}
$$
holds, and $g$ defines a global hermitian metric on $N_{\mathcal{F}}^{1,0}$.
\end{lemma}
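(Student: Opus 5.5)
The identity $g_{\zeta} = \rho(\gamma)^* g_{\gamma \zeta}$ reduces to a statement about ball automorphisms: the pullback $T_z^* g_{\mathbb{C}^n}$ depends only on $z$ up to a unitary change of the target. Once the identity is in hand, the gluing of the local metrics into a global hermitian metric on $N^{1,0}_{\mathcal F}$ is formal.

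The first step is to record how the section $s$ transforms. Lifting $s$ to a $\rho$-equivariant map $\widetilde s \colon \widetilde\Sigma \to \mathbb{B}^n$, the section property means
$$\widetilde s(\gamma\zeta) = \rho(\gamma)\,\widetilde s(\zeta) \quad \text{for all } \gamma\in\pi_1(\Sigma).$$
I extend the definition $g_\zeta := T_{\widetilde s(\zeta)}^* g_{\mathbb{C}^n}$ to all $\zeta\in\widetilde\Sigma$. On $\widetilde D$ this agrees with the definition in the text, via the coordinate $E|_{\pi_E^{-1}(D)}\cong D\times\mathbb{B}^n$.

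The second step is the key algebraic fact. Put $z=\widetilde s(\zeta)$ and $\phi=\rho(\gamma)\in \operatorname{Aut}(\mathbb{B}^n)$. The map
$$U := T_{\phi(z)}\circ \phi\circ T_z$$
is an automorphism of $\mathbb{B}^n$ fixing $0$, since $T_z(0)=z$ and $T_{\phi(z)}$ is an involution sending $\phi(z)$ to $0$. By Cartan's theorem $U$ is a unitary linear map. Because $T_z$ is an involution, this gives
$$T_{\phi(z)}\circ\phi = U\circ T_z,$$
and this identity holds on a neighborhood of $\overline{\mathbb{B}^n}$, since all maps are linear-fractional and biholomorphic near the closed ball. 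Unitary maps preserve $g_{\mathbb{C}^n}$, so
$$\phi^* g_{\gamma\zeta} = \phi^* T_{\phi(z)}^* g_{\mathbb{C}^n} = (T_{\phi(z)}\circ\phi)^* g_{\mathbb{C}^n} = T_z^*U^*g_{\mathbb{C}^n} = T_z^* g_{\mathbb{C}^n} = g_\zeta.$$
This is the claimed identity, as metrics on a neighborhood of $\partial\mathbb{B}^n$ and in particular on $T^{1,0}\mathbb{C}^n|_{\partial\mathbb{B}^n}$.

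The third step handles globality. Fix $\zeta\in\widetilde\Sigma$ and identify $N^{1,0}_{\mathcal F}$ over the fiber $\pi_{\partial E}^{-1}([\zeta])$ with $T^{1,0}\mathbb{C}^n|_{\partial \mathbb{B}^n}$ using the chart at $\zeta$. The chart at $\gamma\zeta$ differs from this one by the differential of $\rho(\gamma)$ acting on the vertical directions. By step two, the metrics $g_\zeta$ and $g_{\gamma\zeta}$ correspond under this change of chart. Hence the $\pi_1(\Sigma)$-invariant family $\{g_\zeta\}_{\zeta\in\widetilde\Sigma}$ descends to a well-defined metric on $N^{1,0}_{\mathcal F}\cong \ker d\pi|_{\partial E}$. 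This also settles the boundary of the fundamental domain, where the charts from $D$ alone would not suffice.

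Finally, the metric is smooth because $\widetilde s$ is smooth and $T_z(w)$ depends real-analytically on $(z,w)$ near $\mathbb{B}^n\times\overline{\mathbb{B}^n}$. It is positive definite because each $T_z$ is biholomorphic near $\overline{\mathbb{B}^n}$.

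The main obstacle is the second step: verifying that $U$ is unitary, and that the composition identity extends past $\partial\mathbb{B}^n$ to a common neighborhood of the closed ball. I would check the extension directly, by noting that every map involved is a projective linear transformation of $\mathbb{CP}^n$ preserving the ball.
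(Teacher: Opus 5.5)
Your proposal is correct and follows essentially the same route as the paper. The paper likewise sets $U = T_{s(\gamma\zeta)}\circ\rho(\gamma)\circ T_{s(\zeta)}^{-1}$, observes $U(0)=0$ by $\rho$-equivariance of $s$ so that $U\in U(n)$, and pulls back $g_{\mathbb{C}^n}$ to obtain $g_\zeta=\rho(\gamma)^*g_{\gamma\zeta}$. Your additional remarks on descent to $N^{1,0}_{\mathcal F}$, smoothness in $\zeta$, and validity of the identity on a neighborhood of $\overline{\mathbb{B}^n}$ supply details the paper leaves implicit.
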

\begin{proof}
We need to show that the construction of $g_\zeta$ does not depend on the choice of a fundamental domain $\widetilde D \subset \widetilde \Sigma$ for the action of $\pi_1(\Sigma)$.
%with respect to the canonical projection $\widetilde \Sigma \rightarrow \Sigma$. 
To see this, we observe that, for $\gamma \in \pi_1(\Sigma)$,
\begin{equation}\label{unitary}
U:=T_{s(\gamma \zeta)} \circ \rho (\gamma) \circ T_{s(\zeta)}^{-1} \in U(n)
\end{equation}
since 
\begin{equation*}
\begin{aligned}
\big( T_{s(\gamma\zeta)} \circ \rho(\gamma) \circ T_{s(\zeta)}^{-1} \big)(0)&=T_{s(\gamma \zeta)} (  \rho(\gamma) ( s(\zeta) ) )
=T_{s(\gamma \zeta)} (s(\gamma \zeta)) =0.
\end{aligned}
\end{equation*}
Here, we use that $s$ is a $\rho$-equivariant map. 
Then by \eqref{unitary}, we have
\begin{equation}\nonumber
\begin{aligned}
(T_{s(\zeta)}^* \circ U^*)(g_{\mathbb{C}^n}) =  \rho(\gamma)^* \circ T_{s(\gamma \zeta)}^* (g_{\mathbb{C}^n}). 
\end{aligned}
\end{equation}
Since $U$ is unitary, it follows that
$
g_{\zeta} = \rho(\gamma)^* g_{\gamma\zeta}.
$
Therefore, the proof is completed.
\end{proof}

\begin{proposition} \label{prop:semi-positive}
Let $\Sigma$ be a K\"ahler manifold and $E$ a holomorphic fiber bundle over $\Sigma$ with fiber $\mathbb{B}^n$ embedded in the associated $\mathbb{CP}^n$-bundle $\pi \colon \widehat{E} \to \Sigma$.
Assume that a harmonic section $s$ of $E$ is given. Then the CR normal line bundle $\mathcal{N}$ over $\partial E$ admits a smooth hermitian metric of semi-positive curvature along the horizontal foliation $\mathcal{F}$. Moreover, the degeneracy locus of the curvature is contained in $\pi_{\partial E}^{-1}(C)$ where 
$C := \{ [\zeta] \in \Sigma \mid \text{$ds_\zeta: T_\zeta\widetilde{\Sigma} \to T_{s(\zeta)}\mathbb{B}^n$ is not injective} \}$.
\end{proposition}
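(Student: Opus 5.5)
The plan is to compute the leafwise curvature of the quotient metric on $\mathcal N$ induced by the metric $g$ of Lemma \ref{lem:metric}, and to recognize it as the complex Hessian of a Busemann function composed with $s$. Work over $D\times\partial\mathbb B^n$ and fix a leaf $D\times\{w\}$ with $|w|=1$. In the coordinate $t=T_{s(\zeta)}(w)$ the sphere is $\{|t|^2=1\}$ and $g_\zeta$ is Euclidean. So for $v\in T^{1,0}_w$ the quotient norm of $[v]\in\mathcal N$ equals $|\partial r_\zeta(v)|/|\partial r_\zeta|_{g_\zeta}$, with the defining function $r_\zeta(w):=|T_{s(\zeta)}w|^2-1$. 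Here $|\partial r_\zeta|_{g_\zeta}=|t|=1$ on the boundary. By \eqref{calabi diastasis},
\[
r_\zeta(w)=-\frac{(1-|s(\zeta)|^2)(1-|w|^2)}{|1-w\cdot\overline{s(\zeta)}|^2},
\]
and on $|w|=1$ only the derivative of the factor $1-|w|^2$ survives. Hence $\partial_w r_\zeta=\frac{1-|s|^2}{|1-w\cdot\bar s|^2}\,\partial|w|^2$.

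Take the local frame $e$ of $\mathcal N$ given by the class of a vertical vector $v$ with $\partial|w|^2(v)=1$, chosen independent of $\zeta$. Since the flat structure makes $w$ leafwise constant, $e$ is leafwise holomorphic. Its squared norm is
\[
h(\zeta,w)=\Bigl(\frac{1-|s(\zeta)|^2}{|1-w\cdot\overline{s(\zeta)}|^2}\Bigr)^2 .
\]
The leafwise curvature is therefore $-\sqrt{-1}\partial\overline\partial_\zeta\log h=2\sqrt{-1}\partial\overline\partial_\zeta (b_w\circ s)$, where
\[
b_w(z):=\log\frac{|1-z\cdot\bar w|^2}{1-|z|^2}
\]
is, up to a positive constant, the Busemann function of the Bergman (complex hyperbolic) metric of $\mathbb B^n$ centered at the boundary point $w$. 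Lemma \ref{lem:metric} guarantees that this local computation glues to a global metric on $\mathcal N$. I will double-check signs and orientation conventions at this stage, in particular that $h$ and not $h^{-1}$ is the correct norm.

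Next I restrict to a complex line spanned by $X, JX$ in $T_\zeta\widetilde\Sigma$. Using the composition formula,
\[
\Delta(b_w\circ s)=\nabla^2 b_w(ds X,ds X)+\nabla^2 b_w(ds JX,ds JX)+db_w(\tau_X),
\]
where $\tau_X$ is the tension of $s$ restricted to that complex curve. When $\Sigma$ is a Riemann surface, $\tau_X$ is the tension of $s$, which vanishes. In higher dimension I invoke the Siu--Sampson theorem: complex hyperbolic space has nonpositive Hermitian sectional curvature, so $s$ (for compact Kähler base) is pluriharmonic and $\tau_X=0$ for every complex line. Thus $\sqrt{-1}\partial\overline\partial(b_w\circ s)$ evaluated on $X$ is a positive multiple of the sum of the two Hessian terms.

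The main obstacle is the Hessian of the Busemann function on complex hyperbolic space. I expect to show $\nabla^2 b_w\ge 0$, with kernel exactly the line $\mathbb R\nabla b_w$. My first attempt would be to move the base point to $0$ by an automorphism fixing $w$ and compute directly with $b_w(z)=\log|1-z\cdot\bar w|^2-\log(1-|z|^2)$. The alternative is to use the standard horosphere description: eigenvalues $0$ along $\nabla b$, $2c$ along $J\nabla b$, and $c$ on the rest. Granting this, semi-positivity follows immediately. Moreover, degeneracy at $\zeta$ forces $ds(X)$ and $ds(JX)$ both to lie in $\mathbb R\nabla b_w$. Then $ds$ restricted to the real span of $X,JX$ is not injective, so $[\zeta]\in C$ and the degeneracy locus lies in $\pi_{\partial E}^{-1}(C)$.
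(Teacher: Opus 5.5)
Your proposal is correct and is essentially the paper's proof. It uses the same quotient metric from Lemma~\ref{lem:metric}, the same leafwise holomorphic frame (your $v$ with $\partial|w|^2(v)=1$ gives the same class as the paper's $\tau=\sum_j w_j\,\partial/\partial w_j$), the same norm $\frac{1-|s|^2}{|1-s\cdot\overline{w}|^2}$, and the same use of pluriharmonicity to kill the tension term. The only difference is packaging: you recognize $-\tfrac12\log|\tau|^2_{g_{\mathcal N}}$ as the Busemann function $b_w\circ s$ and quote the horosphere Hessian (kernel $\mathbb{R}\nabla b_w$), which gives semi-positivity and the degeneracy statement at once, whereas the paper normalizes $s(\zeta_0)=0$ and checks the same inequality by a direct Cauchy--Schwarz estimate; that estimate is exactly your ``first attempt'' computation of $\nabla^2 b_w$ at the origin (your picture also shows the cross term there should carry a minus sign, which does not affect the estimate).
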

\begin{proof}
Lemma \ref{lem:metric} yields a hermitian metric $g$ on $N^{1,0}_{\mathcal F}$. 
Take the orthogonal decomposition $N^{1,0}_{\mathcal F} = \mathcal{E} \oplus \mathcal{E}^\perp$ with respect to $g$ and induce a metric $g_{\mathcal{N}}$ on $\mathcal{N}$ by identifying $\mathcal{N}$ with $\mathcal{E}^\perp$ smoothly.
We show that the metric $g_{\mathcal N}$ has semi-positive curvature along $\mathcal F$. 

First we compute the local expression of $g_\mathcal{N}$. We trivialize $N_\mathcal{F}^{1,0} |_{\pi_{\partial E}^{-1} (D)} \cong (D_{\zeta} \times \partial \mathbb{B}^n_w )\times \mathbb{C}^n$, by using the isomorphism 
$
N^{1,0}_{\mathcal{F},p} \cong T^{1,0}_{V_1^\zeta, w} \cong \mathbb{C}^n
$
for each point $p = [(\zeta, w ) ] \in \partial E$, where $D \subset \Sigma$ is the open image of the interior of a fundamental domain $\widetilde D \subset \widetilde \Sigma$ for the canonical projection $\widetilde \Sigma \rightarrow \Sigma$.
Let $\tau := \sum_{j=1}^{n} w_j \frac{\partial}{\partial w_j}$, a leafwise holomorphic section of $N^{1,0}_{\mathcal{F}}$ over ${\pi^{-1}_{\partial E} (D)}$. This $\tau$ induces a leafwise holomorphic frame $[\tau]$ of $\mathcal{N}$ over $\pi^{-1}_{\partial E} (D)$.

On the other hand, a {global} smooth normal frame of ${\mathcal E}^\perp$ can be chosen as $\nu := \sum_{\ell=1}^n t_\ell \frac{\partial}{\partial t_\ell}$ where $(t_1, \dots, t_n)$ denotes the coordinate of $V_2^\zeta$. 
{This local expression of $\nu$ gives a global vector field since the coordinates change from $V_2^\zeta$ to $V_2^{s(\zeta)}$ is given by 
$$
T_{s(\gamma \zeta)} \circ \rho( \gamma) \circ T_{s(\zeta)}^{-1}= U \in U(n)
$$
by equation \eqref{unitary} and the Euler vector field is preserved by $U(n)$.}

From the definition of $g_{\mathcal{N}}$,
$$
|\tau|^2_{g_{\mathcal{N}}} := g_{\mathcal{N}}([\tau],[\tau]) = g(g(\tau, \nu)\nu, g(\tau, \nu)\nu) = |g(\tau, \nu) |^2.
$$
Since
\begin{equation*}
\begin{aligned}
g(\tau, \nu) &= \left(T_{s(\zeta)}^* ( g_{\mathbb{C}^n} )\right)(\tau,\nu) %= g_{\mathbb{C}^n} (T_{*} \gamma, \nu) 
=
\sum_{j,\ell}  w_j \frac{\partial (T_{s(\zeta)} w)_\ell}{\partial w_j} \overline{(T_{s(\zeta)}w)_\ell} 
= \sum_{j}  w_j \frac{\partial |T_{s(\zeta)} w|^2}{\partial w_j},
\end{aligned}
\end{equation*}
by \eqref{calabi diastasis} we have
\begin{equation*}
\begin{aligned}
g(\tau, \nu) &= \sum_j w_j \frac{\partial}{\partial w_j} \frac{(1-|s(\zeta)|^2)(1-|w|^2)}{|1-s(\zeta)\cdot\overline{w}|^2} = \frac{1-|s(\zeta)|^2}{|1-s(\zeta)\cdot \overline{w}|^2} 
\end{aligned}
\end{equation*}
where we used $|w|= 1$. This gives the local expression of $g_\mathcal{N}$. 

Next we compute the leafwise curvature of $g_\mathcal{N}$. Fix a point $[(\zeta_0,w_0)] \in \partial E$. By changing local trivialization of $E$, we may assume that $s(\zeta_0) = 0 \in \mathbb{B}^n$.
Since $s$ is pluriharmonic, $s_{\zeta_j \overline{\zeta}_k}(\zeta_0) = 0$, we have at $[(\zeta_0,w_0)] \in \partial E$
\begin{equation*}
\frac{\partial^2}{\partial \zeta_j \partial \bar \zeta_k } \left(- \log (1-|s|^2 )\right) = s_{\zeta_j} \cdot \overline{s}_{\bar \zeta_k}  + \overline {s}_{\zeta_j} \cdot s_{\bar \zeta_k}
\end{equation*}
and
\begin{equation*}
\begin{aligned}
\frac{\partial}{\partial \zeta_j\partial \bar \zeta_k } \bigg( \log (1- s \cdot \overline w ) + \log (1- \overline s \cdot w) \bigg)
&= -\frac{\partial}{\partial \zeta_j} \bigg(\frac{ s_{\overline \zeta_k} \cdot \overline w }{1- s \cdot\overline w } + \frac{\overline s_{\overline \zeta_k} \cdot w}{1-  \overline s \cdot w} \bigg) \\
&= {( s_{\zeta_j} \cdot \overline w_0) ( s_{\overline \zeta_k} \cdot \overline w_0)}{} + {(w_0 \cdot \overline s_{\zeta_j} ) (w_0 \cdot \overline s _{\overline \zeta_k})}{} .
\end{aligned}
\end{equation*}
Then, summing up, we obtain
\begin{equation*}
\begin{aligned}
\frac{1}{2}\frac{\partial^2 (-\log |g(\tau, \nu)|^2)}{\partial \zeta_j \partial \overline \zeta_k} 
&=  (s_{\zeta_j} \cdot \overline{s}_{\bar \zeta_k}  + \overline {s}_{\zeta_j} \cdot s_{\bar \zeta_k}) 
+(s_{\zeta_j} \cdot \overline w_0) ( s_{\overline \zeta_k} \cdot \overline w_0)
+  ( \overline s_{\zeta_j} \cdot w_0) ( \overline s _{\overline \zeta_k} \cdot w_0) 
. 
\end{aligned}
\end{equation*}

Therefore, by the Cauchy-Schwarz inequality, we estimate the curvature tensor as
\begin{equation*}
\begin{aligned}
\frac{1}{2}\Theta_{g_{\mathcal N}} (X,\overline{X}) &=-\frac{1}{2}\sum_{j,k}\frac{\partial^2 \log |g(\tau, \nu)|^2}{\partial \zeta_j \partial \overline \zeta_k} X_j \overline X_k\\
&= |\partial s(X)|^2 + |\bar \partial s(\overline X)|^2 + 2 \operatorname{Re} ( \partial s(X) \cdot \overline w_0)( \overline\partial s(\overline X) \cdot \overline w_0)\\
&\geq |\partial s(X)|^2 + |\overline\partial s(\overline X)|^2 - 2 |\partial s(X)\cdot \overline w_0| | \overline\partial s(\overline X) \cdot \overline w_0|\\
&\geq |\partial s(X)|^2 + |\overline\partial s(\overline X)|^2 - 2 |\partial s(X)| | \overline\partial s(\overline X)|\\
&= (|\partial s(X)| - |\overline\partial s(\overline X)|)^2 \geq 0
\end{aligned}
\end{equation*}
where $X = \sum_j X_j \frac{\partial}{\partial \zeta_j} \in T^{1,0}_{\mathcal{F}, [(\zeta_0, w_0)]}$ is identified with $\pi_* X \in T^{1,0}_{\Sigma, [\zeta_0]}$. 
This shows that the leafwise curvature of $g_\mathcal{N}$ is semi-positive. 

Finally, we examine the degeneracy locus of curvature. Assume that the curvature vanishes at  $[(\zeta_0,w_0)] \in \partial E$ for non-zero $X \in T^{1,0}_{\mathcal{F}, [(\zeta_0, w_0)]}$.
Since the equality holds in the Cauchy--Schwarz inequality used above and $|\partial s(X)| = |\overline\partial s(\overline X)|$, $\partial s(X) = e^{i\theta} \overline\partial s(\overline{X})$ holds at $[\zeta_0] \in \Sigma$ for some $e^{i\theta} \in U(1)$.
This implies that $ds(X - e^{i\theta}\overline{X}) = 0$, as $X$ is of $(1,0)$ form, and $ds_{\zeta_0} \colon T_{\zeta_0}\widetilde{\Sigma} \to T_{s(\zeta_0)}\mathbb{B}^n$ has non-trivial kernel.
\end{proof}

\begin{remark} \label{rem:non-positivity}
The degeneracy locus $C$ of the metric constructed in Proposition \ref{prop:semi-positive} coincides with the entire $\Sigma$ if $\dim_{\mathbb{C}} \Sigma > n$, where $n$ is the dimension of fiber $\mathbb{B}^n$. 
We conjecture that $\mathcal{N}$ would never admit a smooth hermitian metric of positive curvature along the horizontal foliation $\mathcal{F}$ when $\Sigma$ is a compact complex manifold of dimension $> n$ and $n \geq 2$. 
Note that Brinkschulte's theorem \cite{Brinkschulte} implies that $\mathcal{N}$ does not admit a smooth hermitian metric of positive curvature along $\mathcal{F}$ when $\Sigma$ is a compact complex manifold of dimension $> 1$ and $n =1$ since in this case $\partial E$ is a Levi-flat real hypersurface and $\mathcal{N}$ agrees with the normal bundle of the Levi foliation.
\end{remark}

\section{Proof of Theorems}

\begin{proof}[Proof of Theorem \ref{thm:main}]
From Proposition \ref{prop:semi-positive}, we have a smooth hermitian metric $g_{\mathcal N}$ on $\mathcal N$ of semi-positive curvature along $\mathcal F$. The curvature of $g_{\mathcal N}$ degenerates exactly on $\pi_{\partial E}^{-1}(C)$, where $C$ is the set of points where rank of $ds$ is less than two.
From the assumption, we have a point $p$ of maximal rank of $s$, hence, $C \neq \Sigma$. There exists a relatively compact open neighborhood $p \in W \Subset \Sigma \setminus C$. Take smaller open neighborhoods $p \in U \Subset V \Subset W$. 
Since $\Sigma \setminus \overline{U}$ is an open Riemann surface, there exists a smooth strictly subharmonic function $\psi$ on $\Sigma \setminus \overline{U}$ thanks to the Behnke--Stein theorem. By a smooth cut-off function, we extend $\psi|_{\Sigma \setminus V}$ to a smooth function $\widehat{\psi}$ on $\Sigma$. 
We may choose $0 < \varepsilon \ll 1$ so that $\sqrt{-1}\Theta_{g_{\mathcal{N}}}  + \varepsilon \pi_{\partial E}^*(\sqrt{-1}\partial\overline{\partial} \widehat \psi) > 0$ as leafwise $(1,1)$-form on $\pi^{-1}_{\partial E}(W)$ since $\sqrt{-1}\Theta_{g_{\mathcal{N}}} > 0$ on $\pi^{-1}_{\partial E}(W)$.
Then ${\widehat g}_\mathcal{N} := e^{-(\widehat{\psi} \circ \pi_{\partial E})} g_\mathcal{N}$ gives a smooth hermitian metric of positive leafwise curvature over $\partial E$. 
\end{proof}

\begin{proof}[Proof of Theorem \ref{cor:1-n-covex}]
We adapt the construction of exhaustion functions on the complement of foliated real hypersurfaces, which originates in the work of Brunella \cite{Brunella} and was used in the study of the Diederich--Forn{\ae}ss index as in \cite{Adachi-Yum} and \cite{Dallara-Mongodi}.

First we shall relate the leafwise postively curved metric ${\widehat g}_\mathcal{N}$ of $\mathcal{N}$ with a pseudo-hermitian structure of $\partial E$.
Consider the global smooth frame $\nu$ of $\mathcal{E}^\perp$ used in the proof of Propositon \ref{prop:semi-positive}.
We write the coordinates of $V^2_\zeta$ as $t_j = u_j + \sqrt{-1}v_j$.
The imaginary part of $\nu$,
$$\operatorname{Im} \nu =   \sum_{\ell=1}^n \operatorname{Im} \left(t_\ell \frac{\partial}{\partial t_\ell}\right) = \frac{1}{2} \sum_{j=1}^n \left( v_j \frac{\partial}{\partial u_j} - u_j \frac{\partial}{\partial v_j}\right),$$ 
is a non-vanishing real smooth vector field on $\partial E$ that is transverse to the maximal complex distribution $H(\partial E) := \operatorname{Re} T^{1,0}_{\partial E}$. We regard $\operatorname{Im} \nu$ as a global smooth frame of the quotient real line bundle $T(\partial E)/H(\partial E)$.
Combining with the positive smooth function $ |\nu|_{\widehat{g}_\mathcal{N}}$ on $\partial E$, we define a pseudo-hermitian structure $\theta$ on $\partial E$ by 
$$
\theta_p(v_p) = |\nu_p|_{\widehat{g}_\mathcal{N}} \frac{v_p}{\operatorname{Im} \nu_p} \quad \text{for $v_p \in T_p(\partial E)/H_p(\partial E)$, $p \in M$}.
$$

Next we compute the D'Angelo $(1,0)$-form $\alpha_{1,0}$ with respect to ${\theta}$. 
Consider the local leafwise holomorphic vector field $\tau$ in the proof of Proposition \ref{prop:semi-positive}, and define a real smooth vector field $T$ on $\pi^{-1}_{\partial E}(D)$ by
$
T := \operatorname{Im} \tau / |\tau|_{\widehat{g}_\mathcal{N}}.
$
This local vector field is $\theta$-normalized because
$$
\theta(T) = \frac{ |\nu|_{\widehat{g}_\mathcal{N}} }{|\tau|_{\widehat{g}_\mathcal{N}}} \frac{\operatorname{Im}\tau}{ \operatorname{Im}\nu}
= 1,
$$
where the quotient is taken in $T(\partial E)/H(\partial E)$, follows from $\tau/|\tau|_{\widehat{g}_\mathcal{N}} = \nu/|\nu|_{\widehat{g}_\mathcal{N}}$ in $\mathcal{N}$.
Since $\operatorname{Im} \tau$ is independent of $\zeta$,
$$
\alpha_{1,0}(\frac{\partial}{\partial \zeta}) = \theta([T, \frac{\partial}{\partial \zeta}])
= - |\tau|_{\widehat{g}_\mathcal{N}} \frac{\partial}{\partial \zeta}\frac{1}{|\tau|_{\widehat{g}_\mathcal{N}}}
= \frac{\partial}{\partial \zeta} \log |\tau|_{\widehat{g}_\mathcal{N}}.
$$
It follows that
$$
\overline{\partial} \alpha_{1,0} =  - \frac{\partial^2}{\partial \zeta \partial \overline{\zeta}} \log |\tau|_{\widehat{g}_\mathcal{N}} d\zeta \wedge d\overline{\zeta} =
\frac{1}{2} \Theta_{\widehat{g}_\mathcal{N}} > 0.
$$
From Proposition \ref{prop:adachi-yum}, this positivity implies that $E$ is 1-convex. 
Also, from Proposition \ref{prop:q-convexity}, this implies that $E'$ is $n$-convex. 

Assume, for the sake of contradiction, that $E'$ is $(n-1)$-convex for $n \geq 2$. Then the fiber $\mathbb{CP}^n \setminus \overline{\mathbb{B}^n}$ is also $(n-1)$-convex and by the finiteness theorem of Andreotti and Grauert \cite{AG}, the Dolbeault cohomology group $H^{n,n-1}(\mathbb{CP}^n \setminus \overline{\mathbb{B}^n})$ is finite dimensional. However, by Serre's duality, the compactly supported Dolbeault cohomology group $H^{0,1}_c(\mathbb{CP}^n \setminus \overline{\mathbb{B}^n})$ is also finite dimensional. 
This enables us to find a non-constant holomorphic function defined on a neighborhood of $\overline{\mathbb{B}^n}$ that holomorphically extends on $\mathbb{CP}^n$;
Consider $f_j = z_1^j$ for $j = 1, 2, \dots$. 
Let $\chi \colon \mathbb{C}^n \to [0,1]$ be a smooth cut-off function such that $\chi(z) = 1$ if $|z| < 2$ and $\chi(z) = 0$ if $|z| > 3$. 
The $(0,1)$-forms $\overline{\partial}(\chi f_j)$ can be seen as compactly supported forms on $\mathbb{CP}^n \setminus \overline{\mathbb{B}^n}$, which are linearly independent.
Since $H_c^{0,1}(\mathbb{CP}^n \setminus \overline{\mathbb{B}^n})$ is assumed to be finite dimensional, there exists some $j$ such that $\overline\partial g = \overline{\partial}(\chi f_j)$ has a smooth solution $g$ compactly supported in $\mathbb{CP}^n \setminus \overline{\mathbb{B}^n}$. Then $\chi f_j - g$ is a non-constant holomorphic function on $\mathbb{CP}^n$ extending $f_j$.
This contradicts the compactness of $\mathbb{CP}^n$.
\end{proof}

\section*{Acknowledgments}
The authors dedicate this paper to Professor Kang-Tae Kim
in honor of his outstanding contributions to several complex variables and complex
geometry. We appreciate Masakazu Takakura for pointing out that a locally trivial holomorphic $\mathbb{B}^n$-bundle over a compact complex manifold of dimension $k$ cannot be Stein if $k > n$.
We also thank Takeo Ohsawa for pointing out an inaccuracy in a draft of this paper.
The first author was partially supported by JSPS KAKENHI Grant Numbers JP24K06776 and JP25K21994. The second author was
partially supported by the National Research Foundation of Korea (NRF) grant funded by
the Korean government (MSIT) (no. RS-2024-00339854). {The third author was supported by the Basic Science Research Program
through the National Research Foundation of Korea (NRF) funded by the Ministry of
Education (RS-2025-00561084).}

\end{document}